\documentclass[10pt,a4paper]{article}

\usepackage[T1]{fontenc}
\usepackage{lmodern}
\usepackage[british]{babel}
\usepackage{amsmath,amssymb,amsthm,mathtools}
\usepackage[a4paper,margin=3cm]{geometry}
\usepackage{parskip}
\usepackage{tikz}
\usepackage{enumitem}
\usepackage{microtype}
\usepackage{needspace}
\usepackage[hidelinks]{hyperref}
\hypersetup{
    pdftitle={Convex Order Beyond Dimension One: Projection Tests, Counterexamples and Gaussian Mixtures},
    pdfauthor={Olivier Guéant}
}
\newcommand{\R}{\mathbb{R}}
\newcommand{\sphere}[1]{\mathbb{S}^{#1}}
\newcommand{\E}{\mathbb{E}}
\newcommand{\distribution}{\mathcal{L}}
\newcommand{\Gauss}{\mathcal{N}}
\newcommand{\cxle}{\lesssim_{\mathrm{cx}}}
\newcommand{\icxle}{\lesssim_{\mathrm{icx}}}
\newcommand{\projcxle}{\lesssim_{\mathrm{1d\text{-}proj\text{-}cx}}}

\theoremstyle{definition}
\newtheorem{definition}{Definition}
\theoremstyle{remark}
\newtheorem{remark}{Remark}
\theoremstyle{plain}
\newtheorem{lemma}{Lemma}
\newtheorem{proposition}{Proposition}
\newtheorem{theorem}{Theorem}

\title{Convex Order Beyond Dimension One:\\[0.3ex]
Projection Tests, Counterexamples and Gaussian Mixtures}
\author{Olivier Guéant\thanks{Université Paris Cité,
Laboratoire de Probabilités, Statistique et Modélisation,
75205 Paris Cedex 13, France.
Email: \href{mailto:olivier.gueant@u-pariscite.fr}
{\texttt{olivier.gueant@u-pariscite.fr}}.}}
\date{}

\begin{document}

\maketitle

\begin{abstract}
In actuarial science and quantitative finance, convex order provides a
natural way to compare risks with the same mean. In dimension one,
convex order is well understood through several characterisations. In higher dimensions, a natural approach is to compare
all one-dimensional projections, but, although necessary, the resulting condition
is in general not sufficient for multivariate convex order. A simple counterexample due to Pinelis exploits Popoviciu's inequality. We
revisit this example using gauge functions, and show that gauge
functions themselves do not suffice to characterise multivariate convex order
for symmetric distributions, even in dimension two. We
then show that the radial case makes the source of the failure
transparent and construct radial counterexamples in every dimension at least two. Nevertheless, projection tests are sufficient for some important
families of distributions. In particular, this is the case for linear transformations of a common radially distributed random vector. This framework
includes elliptical distributions generated from a common radial distribution
and, in particular, centred Gaussian distributions. Beyond comparisons between Gaussian distributions, Jourdain and
Pag\`es recently studied the comparison of a Gaussian distribution with
a Gaussian mixture, obtaining necessary conditions and sufficient conditions whose
equivalence was left open in general. In this paper, we settle the comparison of a centred Gaussian
distribution with a mixture of two centred Gaussian distributions:
in every finite dimension, the Gaussian is dominated by the mixture
in multivariate convex order if and only if all one-dimensional
projections are ordered in convex order.
\end{abstract}

\section{Introduction}

\subsection{Convex order in dimension one}

Comparing uncertain profits and losses is a basic problem in actuarial science and quantitative finance. For real-valued random variables \(X\) and \(Y\) in \(L^1\), a natural comparison is the increasing convex order:\footnote{Throughout the paper, expectations of convex functions of \(L^1\) random variables are understood in the extended sense and may equal \(+\infty\). These integrals are well defined because an affine lower bound ensures that the negative part belongs to \(L^1\).}\textsuperscript{,}\footnote{The same order is obtained if one replaces nondecreasing functions by 
increasing functions in the definition, hence the name. Indeed, for any nondecreasing convex function \(\phi\),
the function \(x\mapsto\phi(x)+\varepsilon x\) is
increasing for all \(\varepsilon>0\), and one can let
\(\varepsilon\downarrow0\).}\textsuperscript{,}\footnote{When \(X\) and \(Y\) represent losses, this means that any agent with increasing concave utility of wealth weakly prefers \(X\)~to~\(Y\), at a common initial wealth.}
\begin{equation*}
\begin{aligned}
    X\icxle Y\quad\Longleftrightarrow\quad
    &\E[\phi(X)]\le\E[\phi(Y)]\\
    &\text{for all nondecreasing convex functions }\phi:\R\to\R.
\end{aligned}
\end{equation*}

Increasing convex order accounts for both the level and the variability of losses. In actuarial terminology, it is the stop-loss order. Indeed, writing
\[
    C_X: t\in\R\longmapsto \E[(X-t)_+]
           =\int_t^{+\infty} \overline F_X(s)\,ds,
\]
where \(\overline F_X=1-F_X\) is the survival function of \(X\), we have
\begin{equation}\label{eq:stop-loss}
    X\icxle Y
    \quad\Longleftrightarrow\quad
    \forall t\in\R,\ C_X(t)\le C_Y(t).
\end{equation}
For a nonnegative random variable \(X\) representing insurance losses and a retention \(t\ge0\), \(C_X(t)\) is the pure premium of a stop-loss contract. This characterisation explains the role of increasing convex order in the comparison of insurance portfolios and reinsurance arrangements; see~\cite{KaasEtAl2008,ShakedShanthikumar2007}.

Increasing convex order also admits a characterisation based on quantiles:
\[
    X\icxle Y
    \quad\Longleftrightarrow\quad \forall u\in(0,1),\
    \int_u^1 F_X^{-1}(s)\,ds
    \le
    \int_u^1 F_Y^{-1}(s)\,ds,
\]
where \(F_X^{-1}(u)=\inf\{x\in\R\mid F_X(x)\ge u\}\) is the quantile function.\footnote{Equivalently, the expected shortfalls
\(\operatorname{ES}_u(X)=\frac{1}{1-u}\int_u^1F_X^{-1}(s)\,ds\) and \(\operatorname{ES}_u(Y)=\frac{1}{1-u}\int_u^1F_Y^{-1}(s)\,ds\)
are ordered at every confidence level \(u\in(0,1)\).}

Thus, increasing convex order can be characterised through explicit inequalities involving survival functions or quantile functions.

Convex order is the equal-mean case of this comparison. For real-valued random variables \(X\) and \(Y\) in \(L^1\),
\begin{equation*}
\begin{aligned}
    X\cxle Y
    \quad&\Longleftrightarrow\quad
    \E[\phi(X)]\le\E[\phi(Y)]\\[-0.3ex]
    &\hspace{4em}\text{for all convex functions }\phi:\R\to\R\\
    &\Longleftrightarrow\quad
    X\icxle Y\quad\text{and}\quad\E[X]=\E[Y].
\end{aligned}
\end{equation*}

\begin{remark}[Stop-loss, put, and absolute-deviation test functions]
\label{rem:scalar-tests}
For real-valued random variables \(X\) and \(Y\) in \(L^1\), the following conditions are equivalent:\footnote{In condition (iv), equality of means is obtained by letting \(t\to\pm\infty\) after subtracting \(|t|\).}
\begin{enumerate}[label=(\roman*)]
    \item \(X\cxle Y\);
    \item \(\E[X]=\E[Y]\) and \(\forall t\in\R,\ \E[(X-t)_+]\le\E[(Y-t)_+]\);
    \item \(\E[X]=\E[Y]\) and \(\forall t\in\R,\ \E[(X-t)_-]\le\E[(Y-t)_-]\);
    \item \(\forall t\in\R,\ \E[|X-t|]\le\E[|Y-t|]\).
\end{enumerate}
\end{remark}

Convex order therefore compares dispersion at a fixed mean. Its quantile characterisation is\footnote{This is the continuous counterpart of the partial-sum criterion for majorisation: for uniform distributions on finite sets of a given cardinality, it reduces to comparison of the partial sums of the support points arranged in increasing order, with equality of the total sums. See~\cite{HardyLittlewoodPolya1952} for the classical theory of majorisation and its applications.}
\[
    X\cxle Y
    \quad\Longleftrightarrow\quad
    \begin{cases}
    \displaystyle\int_0^uF_X^{-1}(s)\,ds
       \ge\int_0^uF_Y^{-1}(s)\,ds,
       &\forall u\in(0,1),\\[1ex]
    \displaystyle\int_0^1F_X^{-1}(s)\,ds
       =\int_0^1F_Y^{-1}(s)\,ds.
    \end{cases}
\]

Beyond analytic characterisations, Strassen's theorem provides a probabilistic characterisation of convex order -- see~\cite{Strassen1965}:
\[
    X\cxle Y
    \quad\Longleftrightarrow\quad
    \begin{array}{c}
    \text{there exist jointly defined }\widetilde X\stackrel{\mathcal D}{=}X
    \text{ and }\widetilde Y\stackrel{\mathcal D}{=}Y\\[0.2ex]
    \text{such that }\E[\widetilde Y\mid\widetilde X]=\widetilde X,
    \end{array}
\]
where \(\stackrel{\mathcal D}{=}\) denotes equality in distribution. The joint distribution of \((\widetilde X,\widetilde Y)\) is called a martingale coupling. Equivalently, \(\widetilde Y=\widetilde X+\varepsilon\), where \(\E[\varepsilon\mid\widetilde X]=0\). This is the mean-preserving-spread interpretation of increasing risk developed in economics by Rothschild and Stiglitz~\cite{RothschildStiglitz1970}.

\subsection{Multivariate convex order}

The definition of convex order extends directly to \(\R^d\)-valued random variables \(X\) and \(Y\) in \(L^1\):
\[
    X\cxle Y
    \quad\Longleftrightarrow\quad
    \E[\phi(X)]\le\E[\phi(Y)]
    \quad\text{for all convex functions }\phi:\R^d\to\R.
\]
Strassen's martingale-coupling characterisation remains valid in every dimension, giving the same interpretation in terms of conditionally mean-zero noise. This viewpoint underlies martingale optimal transport and its applications to robust pricing and hedging in mathematical finance; see, for example, \cite{BeiglbockHenryLaborderePenkner2013}.

For finitely supported distributions, Strassen's theorem yields a linear programming formulation. Suppose that \(X\) and \(Y\) have respective supports \(\{x_1,\ldots,x_m\}\) and \(\{y_1,\ldots,y_n\}\), with
\[
    \forall i\in\{1,\ldots,m\},\ \mathbb P(X=x_i)=p_i \quad\text{and}\quad \forall j\in\{1,\ldots,n\},\ \mathbb P(Y=y_j)=q_j.
\]
Then \(X\cxle Y\) if and only if there exist nonnegative coupling
masses \((\pi_{ij})_{1\le i\le m,\,1\le j\le n}\) satisfying
\[
\begin{cases}
    \displaystyle\sum_{j=1}^n\pi_{ij}=p_i,
    &\forall i\in\{1,\ldots,m\},\\[1ex]
    \displaystyle\sum_{i=1}^m\pi_{ij}=q_j,
    &\forall j\in\{1,\ldots,n\},\\[1ex]
    \displaystyle\sum_{j=1}^n\pi_{ij}y_j=p_i x_i,
    &\forall i\in\{1,\ldots,m\}.
\end{cases}
\]
After writing each vector equation of the last line coordinate by coordinate, this is a feasibility problem with \(mn\) nonnegative variables and \(m+n+md\) scalar equality constraints, some of which are redundant. It can be submitted to a standard linear programming solver with a constant objective. Thus, in the case of finite-support distributions there exists a numerical route in any dimension, although the support sizes and numerical tolerances determine its practical scope.

While this linear programming formulation provides a practical criterion for finitely supported distributions, finding tractable analytic criteria for general distributions beyond dimension one, comparable to the scalar stop-loss and quantile criteria, remains a challenge. A natural approach is to reduce the comparison to one-dimensional projections, to which these scalar criteria apply. When the coordinates represent asset losses or risk factors, this corresponds to comparing all linear portfolios.

\subsection{One-dimensional projected convex order}

The above discussion motivates the following definition:

\begin{definition}[One-dimensional projected convex order]
For \(\R^d\)-valued random variables \(X\) and \(Y\) in \(L^1\), we write
\begin{equation}\label{eq:projected-cx}
\begin{aligned}
    X\projcxle Y
    &\quad\Longleftrightarrow\quad
    \forall \xi\in\R^d,\ \xi'X\cxle\xi'Y\\
    &\quad\Longleftrightarrow\quad
    \forall \xi\in\sphere{d-1},\ \xi'X\cxle\xi'Y.
\end{aligned}
\end{equation}
\end{definition}

\begin{remark}
By the scalar stop-loss characterisation \eqref{eq:stop-loss},
\eqref{eq:projected-cx} is equivalent to
\[
    \forall (\xi,t)\in\sphere{d-1}\times\R,\
    \E[(\xi'X-t)_+]\le\E[(\xi'Y-t)_+].
\]
Indeed, these inequalities imply \(\xi'X\icxle\xi'Y\) for both
\(\xi\) and \(-\xi\), hence equality of the projected means.
\end{remark}

Multivariate convex ordering implies one-dimensional projected convex
ordering. Since a probability distribution on \(\R^d\) is determined
by the distribution of its one-dimensional projections, by injectivity of
the Fourier transform, it is natural to ask whether ordering
all projections characterises multivariate convex order. It turns out that the answer
is negative in every dimension \(d\ge2\).

In Theorem~\ref{thm:pinelis}, we revisit the geometric counterexample
of Pinelis~\cite{Pinelis2026}, using the gauge of the
triangle as the decisive test function. The natural role played by the gauge in that counterexample motivates Proposition~\ref{prop:homogeneous-planar}, which shows that, for symmetric distributions\footnote{We say that an \(\R^d\)-valued random variable \(X\) has symmetric distribution when \(X\stackrel{\mathcal D}{=}-X\).} in dimension \(d=2\), one-dimensional projected convex order implies comparison
for all convex functions that are positively homogeneous of degree \(1\). Consequently, a gauge function cannot play the same decisive role for symmetric distributions as it does in Pinelis's counterexample.

This observation might suggest that symmetry restores the equivalence
between one-dimensional projected and multivariate convex order in
dimension \(d=2\). However, Theorem~\ref{thm:radial} rules out this
possibility: in every dimension \(d\ge2\), there exist radial,
hence symmetric, distributions that are ordered in one-dimensional
projected convex order but not in multivariate convex order.
Beyond providing counterexamples, the radial setting clarifies
the distinction between the two orders: multivariate convex order
is equivalent to increasing convex order between the radii,
whereas one-dimensional projected convex order is equivalent to
increasing convex order between the radii after multiplication by
the same nonnegative angular factor, independent of the radii
(see Propositions~\ref{prop:radial-cx}
and~\ref{prop:radial-projections}).

Despite these general negative results, one-dimensional projection comparisons do characterise multivariate convex order within some important families of probability distributions.

Let \(Z\) be an \(\R^k\)-valued random variable in \(L^1\) with a radial distribution, meaning that \(OZ\stackrel{\mathcal D}{=}Z\) for all \(O \in \mathcal O_k\). Jourdain and Pag\`es~\cite{JourdainPages2022} proved that, for  all  matrices \(A,B\in \mathcal M_{d,k}\),
\[
    AA'\le BB'\quad\Longrightarrow\quad AZ\cxle BZ,
\]
where \(\le\) denotes here the classical order on symmetric matrices. If \(\mathbb P(Z\ne0)>0\), the converse holds even under the weaker assumption \(AZ\projcxle BZ\). Consequently, whenever the radial random variable \(Z\) is not the constant \(0\), we have
\[
    AZ\cxle BZ
    \quad\Longleftrightarrow\quad
    AZ\projcxle BZ
    \quad\Longleftrightarrow\quad
    AA'\le BB'.
\]

\begin{remark}[The converse under an \(L^1\) assumption]
Under an \(L^2\) assumption, the converse follows by testing the functions \(x\mapsto(\xi'x)^2\). Under the sole assumption \(Z=(Z_1,\ldots,Z_k)'\in L^1\), absolute-value tests suffice. Indeed, rotational invariance gives\footnote{If \(A'\xi\ne0\), consider \(O_A \in \mathcal O_k\) whose first column is
the unit vector \(A'\xi/\|A'\xi\|\). Then
\(O_A'A'\xi=\|A'\xi\|e_1\), so
\(\xi'AZ\stackrel{\mathcal D}{=}\xi'AO_AZ
=\|A'\xi\|Z_1\).
If \(A'\xi=0\), both sides of the identity are zero.}
\[
    \forall \xi\in\R^d,\ \E[|\xi'AZ|]
    =\sqrt{\xi'AA'\xi}\,\E[|Z_1|],
\]
and similarly for \(B\). Since \(\mathbb P(Z\ne0)>0\), rotational invariance ensures
\(\E[|Z_1|]>0\).\footnote{If \(\E[|Z_1|]=0\), then \(Z_1=0\)
almost surely. Rotational invariance implies that every coordinate
has the same distribution as \(Z_1\), so \(Z=0\) almost surely,
a contradiction.}
Testing \(AZ\projcxle BZ\) with the absolute-value function and
dividing by \(\E[|Z_1|]\), which is finite since \(Z\in L^1\), therefore yields \(AA'\le BB'\).
\end{remark}

\begin{remark}
The above equivalence applies to elliptical distributions obtained
as linear images of a common radial random variable. The requirement
of a common radial distribution is essential -- see Theorem~\ref{thm:radial}.
\end{remark}

In particular, centred Gaussian vectors admit such a representation
with a common standard Gaussian vector. Thus, for
\(X\sim\Gauss(0,\Sigma_X)\) and \(Y\sim\Gauss(0,\Sigma_Y)\),
where \(\Sigma_X,\Sigma_Y\in \mathcal S_d^+\), the two orders coincide
and are characterised by the comparison of the variance-covariance matrices:
\begin{equation}\label{eq:gaussian-covariance}
    X\cxle Y
    \quad\Longleftrightarrow\quad
    \Sigma_X\le\Sigma_Y.
\end{equation}

\subsection{Comparing a Gaussian distribution and a Gaussian mixture
in both directions}

Jourdain and Pag\`es investigate in \cite{JourdainPages2025} convex
comparison between a Gaussian distribution and a Gaussian mixture.
Although their framework is more general, we focus in this paper on the specific case of mixtures of \(n=2\) centred Gaussian distributions. For that purpose, consider \(p\in(0,1)\) and
\(\Sigma,\Sigma_1,\Sigma_2\in \mathcal S_d^+\).

For
\[
    X\sim p\Gauss(0,\Sigma_1)+(1-p)\Gauss(0,\Sigma_2) \quad\text{and}\quad Y\sim\Gauss(0,\Sigma),
\]
their results give
\[
    X\cxle Y
    \quad\Longleftrightarrow\quad
    X\projcxle Y
    \quad\Longleftrightarrow\quad
    \Sigma_1\le\Sigma
    \ \text{and}\
    \Sigma_2\le\Sigma.
\]
Thus, multivariate and one-dimensional projected convex order
coincide in this setting.

The reverse comparison is more subtle. Consider
\[
    X\sim\Gauss(0,\Sigma)\quad\text{and}\quad
    Y\sim p\Gauss(0,\Sigma_1)+(1-p)\Gauss(0,\Sigma_2).
\]
Jourdain and
Pag\`es prove in \cite{JourdainPages2025} that
\begin{equation}\label{eq:necessary}
    X\projcxle Y
    \quad\Longleftrightarrow\quad \forall \xi\in\R^d,\
    \sqrt{\xi'\Sigma\xi}
    \le p\sqrt{\xi'\Sigma_1\xi}
         +(1-p)\sqrt{\xi'\Sigma_2\xi}.
\end{equation}

They also give a sufficient condition for multivariate convex order based on a joint Gaussian coupling. With \(P=(pI_d\ \ (1-p)I_d)\), this condition reads\footnote{We recall the argument, as the same coupling idea will play a central role in Section \ref{sec:gaussian-mixtures}. Let \((Z_1,Z_2)\sim\Gauss(0,\Gamma)\). Then \(Z_1\sim\Gauss(0,\Sigma_1)\) and \(Z_2\sim\Gauss(0,\Sigma_2)\), while
\[
    pZ_1+(1-p)Z_2\sim\Gauss(0,P\Gamma P').
\]
Under \eqref{eq:sufficient}, Gaussian comparison \eqref{eq:gaussian-covariance} therefore gives \(X\cxle pZ_1+(1-p)Z_2\). Hence, for every convex function \(\phi:\R^d\to\R\), Jensen's inequality yields
\[
    \E[\phi(X)]\le\E[\phi(pZ_1+(1-p)Z_2)]\le p\E[\phi(Z_1)]+(1-p)\E[\phi(Z_2)]=\E[\phi(Y)],
\]
that is, \(X\cxle Y\).}
\begin{equation}\label{eq:sufficient}
    \exists\,C\in\mathcal M_d \quad\text{such that}\quad \Gamma=\begin{pmatrix}\Sigma_1&C\\C'&\Sigma_2\end{pmatrix}\in \mathcal S_{2d}^+ \quad\text{and}\quad \Sigma\le P\Gamma P'.
\end{equation}

Consequently, \eqref{eq:sufficient} implies multivariate convex
order, which in turn implies one-dimensional projected convex
order and thus the inequalities in~\eqref{eq:necessary}.
Jourdain and Pag\`es establish equivalence between these conditions
under several additional matrix assumptions. In the present paper,
we prove that no such assumptions are needed for mixtures of \(n=2\) centred Gaussian distributions, in any dimension \(d\ge 2\).

We first establish the result in dimension \(d=2\) (Theorem~\ref{thm:dimension-two}), using a linear algebra lemma to construct a suitable Gaussian coupling. We then extend the argument by induction to arbitrary dimension \(d\ge 2\) (Theorem~\ref{thm:two-components-arbitrary-dimension}).

The remainder of the paper is organised as follows.
Section~\ref{sec:counterexamples} revisits Pinelis's counterexample,
examines convex test functions that are positively homogeneous of degree \(1\) for symmetric
distributions on \(\R^2\), and constructs radial counterexamples in every
dimension \(d\ge2\). Section~\ref{sec:gaussian-mixtures} proves
the Gaussian-mixture result, first in dimension \(d=2\) and then
in arbitrary dimension \(d\ge2\). Section~\ref{sec:conclusion} presents
some concluding remarks and paves the way for future research.

\section{Limitations of one-dimensional projection tests}\label{sec:counterexamples}
In this section, we first revisit Pinelis's triangle counterexample, which shows
that one-dimensional projected convex order is strictly weaker than multivariate
convex order in every dimension \(d\ge2\). We then study convex test functions that are positively
homogeneous of degree \(1\) for symmetric distributions in
dimension \(d=2\). Finally, we construct radial counterexamples in
every dimension \(d\ge2\): a first example obtained the old-fashioned
way, and a second example with an absolutely continuous distribution found using generative AI tools.

\subsection{A counterexample built on a triangle}

The fact that one-dimensional projected convex order is strictly weaker than multivariate convex order can be found in \cite{Pinelis2026}.\footnote{I first learned of this result through a MathOverflow post and subsequently discussed it with its author, Iosif Pinelis.} Below, we recast the triangle counterexample of \cite{Pinelis2026} and its application of Popoviciu's inequality. However, unlike in \cite{Pinelis2026}, the decisive convex test function used here is the gauge function of the triangle. To better understand the role played by the gauge function, let us first state a lemma that we call the boundary-support criterion.

\begin{lemma}[Boundary-support criterion]\label{lem:gauge-boundary}
Let \(\mathcal C\subset\R^d\) be open and convex, with \(0\in\mathcal C\). Let \(X\) and \(Y\) be \(\R^d\)-valued random variables in \(L^1\) satisfying
\[
    \mathbb P(X\in\partial\mathcal C)=1 \quad\text{and}\quad \mathbb P(Y\in\overline{\mathcal C})=1.
\]
If \(X\cxle Y\), then \(\mathbb P(Y\in\partial\mathcal C)=1\).
In particular, positive mass of \(Y\) in \(\mathcal C\)
rules out \(X\cxle Y\).
\end{lemma}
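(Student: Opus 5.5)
The plan is to test the convex order against the gauge (Minkowski functional) of \(\mathcal C\),
\[
    \rho(x)=\inf\{\lambda>0 \mid x\in\lambda\mathcal C\}.
\]

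First I collect the facts about \(\rho\) that the argument needs. Since \(\mathcal C\) is open, convex and contains \(0\), it contains a ball \(B(0,r)\). It follows that \(\rho\) is finite, nonnegative, positively homogeneous of degree \(1\) and subadditive, hence convex. It is also Lipschitz with \(\rho(x)\le\|x\|/r\), so \(\rho\) is a real-valued convex function on \(\R^d\) and \(\rho(X),\rho(Y)\) are integrable. The standard description of the level sets for an open convex set containing the origin is
\[
    \mathcal C=\{\rho<1\},\qquad \overline{\mathcal C}=\{\rho\le1\},\qquad \partial\mathcal C=\{\rho=1\}.
\]
I would verify these briefly: \(\rho(x)<1\) gives \(x\in\lambda\mathcal C\subset\mathcal C\) for some \(\lambda<1\), by convexity and \(0\in\mathcal C\). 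Conversely, openness of \(\mathcal C\) gives \((1+\varepsilon)x\in\mathcal C\) for \(x\in\mathcal C\). The closure statement follows from continuity of \(\rho\) together with \(tx\in\mathcal C\) for \(t<1\) whenever \(\rho(x)\le1\).

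Next, the hypotheses give \(\rho(X)=1\) almost surely and \(\rho(Y)\le1\) almost surely. Applying \(X\cxle Y\) to the convex function \(\rho\) yields
\[
    1=\E[\rho(X)]\le\E[\rho(Y)]\le 1.
\]
So \(\E[1-\rho(Y)]=0\) with \(1-\rho(Y)\ge0\) almost surely. Hence \(\rho(Y)=1\) almost surely, that is \(\mathbb P(Y\in\partial\mathcal C)=1\). The final sentence of the lemma is the contrapositive, since \(\mathcal C\cap\partial\mathcal C=\emptyset\) because \(\mathcal C\) is open.

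The only step needing care is the level-set identification of \(\partial\mathcal C\) with \(\{\rho=1\}\), and in particular the use of openness to ensure that points of \(\mathcal C\) have gauge strictly below \(1\). Everything else is a one-line application of the definition of convex order.
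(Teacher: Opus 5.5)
Your proposal is correct and follows essentially the same route as the paper: test $X\cxle Y$ with the gauge of $\mathcal C$, use $\mathcal C=\{\rho<1\}$ and $\overline{\mathcal C}=\{\rho\le1\}$ to obtain $1=\E[\rho(X)]\le\E[\rho(Y)]\le1$, and conclude that $\rho(Y)=1$ almost surely. Your verification of the level-set identities, including $\partial\mathcal C=\{\rho=1\}$ via openness, is a slightly more detailed version of what the paper simply asserts.
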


\begin{proof}
Define the gauge of \(\mathcal C\) by
\[
    j_{\mathcal C}: x\in\R^d
    \longmapsto
    \inf\{\lambda>0\mid x\in\lambda\mathcal C\}.
\]
Since \(\mathcal C\) is convex and contains a ball \(B(0,r)\)
for some \(r>0\), its gauge function is well defined, nonnegative,
convex, and satisfies
\[
    \forall x\in\R^d,\ 
    j_{\mathcal C}(x)\le\frac{\|x\|}{r}.
\]
Moreover,
\[
    \mathcal C=\{x\in\R^d\mid j_{\mathcal C}(x)<1\}
    \quad\text{and}\quad
    \overline{\mathcal C}
    =\{x\in\R^d\mid j_{\mathcal C}(x)\le1\},
\]
so \(j_{\mathcal C}=1\) on \(\partial\mathcal C\).
If \(X\cxle Y\), testing with \(j_{\mathcal C}\) gives
\[
    1=\E[j_{\mathcal C}(X)]
    \le\E[j_{\mathcal C}(Y)]\le1.
\]
Since \(0\le j_{\mathcal C}(Y)\le1\) almost surely, this implies
\(j_{\mathcal C}(Y)=1\) almost surely, hence
\(Y\in\partial\mathcal C\) almost surely.
\end{proof}

\begin{theorem}[One-dimensional projected convex order is strictly weaker]\label{thm:pinelis}
There exist finitely supported \(\R^2\)-valued random variables
\(X\) and \(Y\) such that
\[
    X\projcxle Y
    \quad\text{but}\quad
    X\not\cxle Y.
\]
\end{theorem}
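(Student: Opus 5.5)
We use the triangle construction together with Lemma~\ref{lem:gauge-boundary}. Let \(a_1,a_2,a_3\in\R^2\) be the vertices of a triangle \(T\) whose interior \(\mathcal C\) contains \(0\), say an equilateral triangle centred at the origin. Let \(m_1,m_2,m_3\) be the midpoints of the sides, \(m_i=\tfrac12(a_j+a_k)\) with \(\{i,j,k\}=\{1,2,3\}\).

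We take \(X\) uniform on \(\{m_1,m_2,m_3\}\). For \(Y\) we take the mixture that puts mass \(\tfrac34\) uniformly on the vertices and mass \(\tfrac14\) on the centroid \(g=\tfrac13(a_1+a_2+a_3)\). This is exactly the discrete measure appearing in Popoviciu's inequality. Both variables have mean \(g=0\).

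\emph{The projections are ordered.} Fix \(\xi\) and write \(x_i=\xi'a_i\) and \(\bar x=\tfrac13(x_1+x_2+x_3)\). For a convex \(\phi:\R\to\R\), the inequality \(\E[\phi(\xi'X)]\le\E[\phi(\xi'Y)]\) reads
\[
    \tfrac13\sum_{i<j}\phi\Bigl(\tfrac{x_i+x_j}{2}\Bigr)
    \le\tfrac14\sum_i\phi(x_i)+\tfrac14\phi(\bar x).
\]
Multiplying by \(3\) gives
\[
    \sum_{i<j}\phi\Bigl(\tfrac{x_i+x_j}{2}\Bigr)
    \le\tfrac34\sum_i\phi(x_i)+\tfrac34\phi(\bar x),
\]
which is not yet in standard form. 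Popoviciu's inequality states
\[
    \tfrac23\sum_{i<j}\phi\Bigl(\tfrac{x_i+x_j}{2}\Bigr)
    \le\tfrac13\sum_i\phi(x_i)+\phi(\bar x).
\]
Multiplying it by \(\tfrac32\) gives
\[
    \sum_{i<j}\phi\Bigl(\tfrac{x_i+x_j}{2}\Bigr)
    \le\tfrac12\sum_i\phi(x_i)+\tfrac32\phi(\bar x).
\]
This corresponds to \(Y\) putting weight \(\tfrac16\) on each vertex and \(\tfrac12\) on the centroid. We therefore set
\[
    Y\sim\tfrac16\bigl(\delta_{a_1}+\delta_{a_2}+\delta_{a_3}\bigr)+\tfrac12\delta_g .
\]
The mean is still \(0\), and Popoviciu's inequality applied to the projected points gives \(\xi'X\cxle\xi'Y\) for every \(\xi\), that is, \(X\projcxle Y\).

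\emph{The multivariate order fails.} Apply Lemma~\ref{lem:gauge-boundary} with \(\mathcal C\) the open triangle. The midpoints lie on \(\partial\mathcal C\), so \(\mathbb P(X\in\partial\mathcal C)=1\), and \(Y\) is supported in \(\overline{\mathcal C}\). However, \(Y\) puts mass \(\tfrac12\) on \(g=0\in\mathcal C\), so \(X\not\cxle Y\). Concretely, the gauge of the triangle is the decisive test function: \(\E[j_{\mathcal C}(X)]=1\), while \(\E[j_{\mathcal C}(Y)]=\tfrac12\).

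The only real work is checking that the weights match Popoviciu's inequality exactly, including that the means agree. If one prefers not to cite Popoviciu's inequality, the fallback is to prove the scalar inequality directly by reduction to the test functions \((\cdot-t)_+\) of Remark~\ref{rem:scalar-tests}. For \(x_1\le x_2\le x_3\), this is a piecewise-linear verification in \(t\) across the breakpoints: the points \(x_i\), the midpoints, and \(\bar x\). This verification is the main obstacle.
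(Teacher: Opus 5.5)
Your final construction ($X$ uniform on the edge midpoints, $Y$ with mass $\tfrac16$ on each vertex and $\tfrac12$ on the centroid at the origin, Popoviciu's inequality for the projections, and Lemma~\ref{lem:gauge-boundary} applied to the open triangle for the failure) is exactly the paper's proof. Your first choice of weights ($\tfrac14$ on each vertex and $\tfrac14$ on the centroid) is not the measure in Popoviciu's inequality, as you noticed. It would nonetheless also have worked: it is obtained from the corrected $Y$ by spreading mass $\tfrac14$ from the centroid uniformly onto the vertices, so it is a mean-preserving spread of $Y$ and still puts positive mass at an interior point.
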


\begin{proof}
Let \(A,B,C\in\R^2\) be three non-collinear points, and let
\(O\) be their centre of gravity, assumed without loss of generality to be the origin. Consider
the triangle
\[
    T=\operatorname{Conv}(\{A,B,C\}),
\]
and let
\[
    D=\frac{A+B}{2},\quad
    E=\frac{B+C}{2}
    \quad\text{and}\quad
    F=\frac{C+A}{2}
\]
be its three edge midpoints.

Let \(X\) be uniformly distributed on \(\{D,E,F\}\) and let
\(Y\) have distribution
\(
    \frac16\bigl(\delta_A+\delta_B+\delta_C\bigr)
    +\frac12\delta_O
\).
In other words, \(X\) takes each midpoint with probability \(1/3\),
while \(Y\) takes each vertex with probability \(1/6\) and
the centre of gravity with probability \(1/2\) -- see Figure~\ref{fig:triangle-counterexample}.

\begin{figure}[htbp]
\centering
\begin{tikzpicture}[scale=1.15]
    \coordinate (A) at (-2.3,-1.1);
    \coordinate (B) at (2.0,-0.7);
    \coordinate (C) at (0.3,1.8);
    \coordinate (O) at (0,0);
    \coordinate (D) at (-0.15,-0.9);
    \coordinate (E) at (1.15,0.55);
    \coordinate (F) at (-1.0,0.35);

    \draw[thick] (A)--(B)--(C)--cycle;

    \pgfmathsetmacro{\vertexradius}{2.3}
    \pgfmathsetmacro{\midpointradius}{2.3*sqrt(2)}
    \pgfmathsetmacro{\centreradius}{2.3*sqrt(3)}

    \foreach \P in {A,B,C}
        \fill (\P) circle[radius=\vertexradius pt];
    \fill (O) circle[radius=\centreradius pt];
    \foreach \P in {D,E,F}
        \filldraw[fill=white,draw=black,line width=0.9pt]
        (\P) circle[radius=\midpointradius pt];

    \node[anchor=north east,xshift=-3pt,yshift=-3pt]
        at (A) {\(A\)};
    \node[anchor=north west,xshift=3pt,yshift=-3pt]
        at (B) {\(B\)};
    \node[anchor=south,yshift=5pt] at (C) {\(C\)};
    \node[anchor=north west,xshift=5pt,yshift=-5pt]
        at (O) {\(O\)};
    \node[anchor=north,yshift=-5pt] at (D) {\(D\)};
    \node[anchor=west,xshift=5pt] at (E) {\(E\)};
    \node[anchor=east,xshift=-5pt] at (F) {\(F\)};
\end{tikzpicture}
\caption{The open circles represent the possible values of \(X\),
each with probability \(1/3\). The filled circles represent the
possible values of \(Y\): each vertex has probability \(1/6\),
and \(O\) has probability~\(1/2\). Marker areas are proportional
to the corresponding probabilities.}
\label{fig:triangle-counterexample}
\end{figure}

The distribution of \(X\) is supported on \(\partial T\),
whereas \(Y\) takes values in \(T\) and has probability \(1/2\)
of lying at its interior point \(O\).
Applying Lemma~\ref{lem:gauge-boundary} to the interior of \(T\)
therefore gives \(X\not\cxle Y\).

It remains to compare all one-dimensional projections.
Fix \(\xi\in\R^2\) and a convex function \(\phi:\R\to\R\),
and set
\[
    a=\xi'A,\quad b=\xi'B
    \quad\text{and}\quad c=\xi'C.
\]
Then
\[
\begin{aligned}
    \E[\phi(\xi'X)]
    &=\frac13\left(
        \phi\left(\frac{a+b}{2}\right)
       +\phi\left(\frac{b+c}{2}\right)
       +\phi\left(\frac{c+a}{2}\right)
      \right)\\
    \text{and}\quad
    \E[\phi(\xi'Y)]
    &=\frac16\bigl(\phi(a)+\phi(b)+\phi(c)\bigr)
      +\frac12\phi\left(\frac{a+b+c}{3}\right).
\end{aligned}
\]
Popoviciu's inequality~\cite{Popoviciu1965} states that,
for all convex functions \(\phi:\R\to\R\),
\[
    \forall a,b,c\in\R,\ 
    \frac{\phi(a)+\phi(b)+\phi(c)}{3}
    +\phi\left(\frac{a+b+c}{3}\right)
    \ge
    \frac23\left(
        \phi\left(\frac{a+b}{2}\right)
       +\phi\left(\frac{b+c}{2}\right)
       +\phi\left(\frac{c+a}{2}\right)
    \right).
\]
Dividing by \(2\) yields precisely
\[
    \E[\phi(\xi'X)]\le\E[\phi(\xi'Y)].
\]
Since \(\xi\) and \(\phi\) were arbitrary, \(X\projcxle Y\).
\end{proof}

\begin{remark}
Embedding the preceding pair in \(\R^d\) as
\[
    \bigl(X,\underbrace{0,\ldots,0}_{d-2}\bigr)
    \quad\text{and}\quad
    \bigl(Y,\underbrace{0,\ldots,0}_{d-2}\bigr)
\]
gives a counterexample in every dimension \(d\ge2\).
\end{remark}

\subsection{Symmetric distributions in dimension \texorpdfstring{\(d=2\)}{d=2} and the case of convex
test functions that are positively homogeneous of degree \texorpdfstring{\(1\)}{1}}

In the triangle counterexample above, the distributions are not
symmetric. Our interest in centred Gaussian
distributions and centred Gaussian mixtures leads us to focus
in this subsection on the case of symmetric distributions. For such distributions, every convex test function can be replaced by its even part without changing its expectation.

In dimension \(d=2\), even convex functions that are positively homogeneous of degree \(1\) admit a representation as integrals of absolute values of linear forms with respect to a nonnegative measure. This representation, recalled in Lemma~\ref{lem:angular-homogeneous}, shows that comparison under all these test functions already follows from one-dimensional projected convex order.

The proof of Lemma~\ref{lem:gauge-boundary} gives an intuitive
explanation of the role of gauges for convex ordering:
a gauge distinguishes mass on the boundary from mass in the
interior. The representation of Lemma \ref{lem:angular-homogeneous} below nevertheless shows that gauge functions cannot be decisive for finding symmetric planar distributions that are not ordered in multivariate convex order but
whose one-dimensional projections are in convex order.
During the research process that led to this paper, this observation raised the hope
that symmetry might restore equivalence between the two orders
in the specific case of dimension \(d=2\). However, this hope was dashed by the radial counterexamples of Section \ref{sec:radial-counterexamples}.

\begin{lemma}[Angular representation in the plane]
\label{lem:angular-homogeneous}
Let \(\phi:\R^2\to\R\) be even, convex, and positively
homogeneous of degree \(1\). For all \(\theta\in[0,2\pi)\), set
\[
    u_\theta=(\cos\theta,\sin\theta)',
    \quad
    v_\theta=(-\sin\theta,\cos\theta)'
    \quad\text{and}\quad
    h(\theta)=\phi(u_\theta),
\]
and extend these definitions periodically to \(\R\).
Then \(h\) is continuous, \(\pi\)-periodic, and satisfies
\[
    \forall r\ge0,\ \forall\theta\in\R,\quad
    \phi(ru_\theta)=rh(\theta).
\]
The distribution\footnote{Here the word distribution is in the sense of Schwartz, not in the sense of a probability distribution.} 
\(h+h''\) is a nonnegative, \(\pi\)-periodic measure \(\mu\), whose restriction to \([0,\pi)\) is finite, and
\begin{equation}\label{eq:homogeneous-absolute}
    \forall x\in\R^2,\ 
    \phi(x)=\frac12\int_{[0,\pi)}|v_\theta'x|\,\mu(d\theta).
\end{equation}
If \(\phi\) is of class \(C^2\) on \(\R^2\setminus\{0\}\),
then
\[
    \mu(d\theta)=\bigl(h(\theta)+h''(\theta)\bigr)\,d\theta,
\]
and
\begin{equation}\label{eq:angular-sine}
    \forall\theta\in\R,\ 
    h(\theta)=\frac12\int_\theta^{\theta+\pi}
       \sin(t-\theta)\bigl(h(t)+h''(t)\bigr)\,dt.
\end{equation}
\end{lemma}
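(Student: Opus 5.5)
The plan is to treat $h$ as a distribution on the circle $\R/\pi\mathbb Z$. Positivity of $\mu$ will come from a second-difference inequality forced by convexity, and the representation \eqref{eq:homogeneous-absolute} will come from an explicit fundamental solution of $k+k''=\delta_0$ on that circle.

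\textbf{Elementary properties.} A finite convex function on $\R^2$ is continuous, so $h$ is continuous. Evenness gives $u_{\theta+\pi}=-u_\theta$, so $h(\theta+\pi)=h(\theta)$. Positive homogeneity gives $\phi(ru_\theta)=rh(\theta)$ for $r>0$. The case $r=0$ holds because $\phi(0)=\lim_{r\downarrow0} r\phi(u_0)=0$.

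\textbf{Positivity of $h+h''$.} A convex, positively homogeneous $\phi$ is subadditive. Since $u_{\theta+\varepsilon}+u_{\theta-\varepsilon}=2\cos\varepsilon\,u_\theta$, we get for $|\varepsilon|<\pi/2$
\[
    h(\theta+\varepsilon)+h(\theta-\varepsilon)\ge 2\cos\varepsilon\,h(\theta),
\]
and hence
\[
    \frac{h(\theta+\varepsilon)+h(\theta-\varepsilon)-2h(\theta)}{\varepsilon^2}
    +\frac{2(1-\cos\varepsilon)}{\varepsilon^2}\,h(\theta)\ge0 .
\]
Next, integrate this against an arbitrary nonnegative test function $\psi\in C_c^\infty(\R)$. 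Move the second difference onto $\psi$ by changing variables, and let $\varepsilon\to0$ using uniform convergence of the second difference quotients of $\psi$. This yields $\int h(\psi''+\psi)\ge0$, that is, $h+h''\ge0$ in the sense of distributions. A nonnegative distribution is a nonnegative Radon measure $\mu$. It is $\pi$-periodic because $h$ is, and its restriction to the bounded interval $[0,\pi)$ is finite.

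\textbf{Representation formula.} Write $x=ru_\alpha$ with $r\ge0$. Then $|v_\theta'x|=r|\sin(\alpha-\theta)|$, so by homogeneity \eqref{eq:homogeneous-absolute} reduces to
\[
    h(\alpha)=\int_{[0,\pi)}k(\alpha-\theta)\,\mu(d\theta),
    \qquad k(s)=\tfrac12|\sin s| .
\]
The function $k$ is $\pi$-periodic and continuous, satisfies $k+k''=0$ away from $\pi\mathbb Z$, and its derivative $\tfrac12\cos s\,\operatorname{sign}(\sin s)$ jumps by exactly $1$ at $0$. Hence $k+k''=\delta_0$ as distributions on $\R/\pi\mathbb Z$. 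Convolution on the compact group $\R/\pi\mathbb Z$ commutes with differentiation, so
\[
    k*\mu=k*(h+h'')=(k+k'')*h=\delta_0*h=h .
\]
This is the required identity. The main point needing care is this convolution identity between a measure and a continuous function on the circle. I would verify it directly by pairing both sides with smooth $\pi$-periodic test functions and applying Fubini, which is legitimate because $\mu$ is finite on a period and $k$ is bounded.

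\textbf{The $C^2$ case.} If $\phi\in C^2(\R^2\setminus\{0\})$, then $h\in C^2$ and $\mu=(h+h'')\,d\theta$ classically. Formula \eqref{eq:angular-sine} then follows from the representation. By $\pi$-periodicity, the integral over $[0,\pi)$ can be replaced by the integral over $[\theta,\theta+\pi]$, on which $|\sin(t-\theta)|=\sin(t-\theta)$.
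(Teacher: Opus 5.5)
Your proof is correct, but it takes a different route from the paper's.

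\textbf{The paper's route.} It starts from the smooth case. There, Euler's identity gives $h+h''=v_\theta'\nabla^2\phi(u_\theta)v_\theta\ge0$, and two integrations by parts give \eqref{eq:angular-sine}, hence \eqref{eq:homogeneous-absolute}. A general $\phi$ is then reached by averaging over rotations, $\phi_m(x)=\int_\R\phi(\rho_\alpha(x))\eta_m(\alpha)\,d\alpha$. The paper extracts a weakly convergent subsequence of the measures $(h_m+h_m'')\,d\theta$ on $[0,\pi]$ and merges the endpoint masses. It identifies the limit with $h+h''$ and passes to the limit in the representation.

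\textbf{Your route.} You work with the nonsmooth $h$ from the start. Your inequality $h(\theta+\varepsilon)+h(\theta-\varepsilon)\ge2\cos\varepsilon\,h(\theta)$ is the finite-difference form of the Hessian computation. Combined with the fact that nonnegative distributions are Radon measures, it yields $\mu$ directly. You then invert $1+d^2/d\theta^2$ on $\R/\pi\mathbb Z$ with its fundamental solution $k=\tfrac12|\sin|$. In your approach the $C^2$ statements are corollaries rather than the base case. In fact, the paper's two integrations by parts are exactly the classical check of your identity $k*(h+h'')=h$ when $h$ is smooth. The Fubini pairing you propose does justify this identity in general. The only extra step is to periodise test functions, so that $\mu=h+h''$, known against $C_c^\infty(\R)$, can be paired with smooth $\pi$-periodic functions such as $k*\psi$.

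\textbf{What each buys.} Your route removes the mollification, the compactness argument and the endpoint bookkeeping. It also shows that \eqref{eq:homogeneous-absolute} is just a convolution identity. The cost is relying on standard results from distribution theory. The paper's route uses only calculus and weak compactness of finite measures on an interval. In the smooth case it also shows the density $h+h''$ explicitly as a tangential second derivative of $\phi$.
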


\begin{proof}
Homogeneity and evenness of \(\phi\) give the polar representation and
\(\pi\)-periodicity of \(h\). Continuity of \(h\) follows from convexity, hence continuity, of \(\phi\).

Suppose first that \(\phi\) is of class \(C^2\) on \(\R^2\setminus\{0\}\).
Differentiating \(h(\theta)=\phi(u_\theta)\) twice gives
\[
    \forall\theta\in\R,\ 
    h''(\theta)
    =v_\theta'\nabla^2\phi(u_\theta)v_\theta
     -u_\theta'\nabla\phi(u_\theta).
\]
By Euler's identity for positively homogeneous functions
of degree \(1\),
\[
    \forall\theta\in\R,\ 
    u_\theta'\nabla\phi(u_\theta)=\phi(u_\theta)=h(\theta).
\]
Consequently, convexity of \(\phi\) yields
\[
    \forall\theta\in\R,\ 
    h''(\theta)+h(\theta)
    =v_\theta'\nabla^2\phi(u_\theta)v_\theta\ge0.
\]
Integrating twice by parts and using
\(h(\theta+\pi)=h(\theta)\), we obtain
\[
    \int_\theta^{\theta+\pi}\sin(t-\theta)h''(t)\,dt
    =2h(\theta)
     -\int_\theta^{\theta+\pi}\sin(t-\theta)h(t)\,dt.
\]
This proves \eqref{eq:angular-sine}. Since
\[
    \forall\theta,t\in\R,\ 
    |v_\theta'u_t|=|\sin(t-\theta)|,
\]
periodicity and homogeneity give
\eqref{eq:homogeneous-absolute} in the smooth case.

For the general case, consider a sequence of nonnegative \(C^\infty\) mollifier functions
\((\eta_m)_{m\ge1}\) on \(\R\) satisfying
\[
    \forall m \ge 1, \quad \operatorname{supp}(\eta_m)\subset[-1/m,1/m]
    \quad\text{and}\quad
    \int_\R\eta_m(\alpha)\,d\alpha=1.
\]

Writing \(\rho_\alpha\) for the rotation by angle \(\alpha\),
we define
\[
    \phi_m(x)
    =\int_\R\phi(\rho_\alpha(x))\eta_m(\alpha)\,d\alpha
    \quad\text{and}\quad
    h_m(\theta)=\phi_m(u_\theta).
\]
Each \(\phi_m\) is even, convex, and positively homogeneous
of degree \(1\). Moreover, a change of variable gives
\[
    \forall\theta\in\R,\ 
    h_m(\theta)
    =\int_\R h(\theta+\alpha)\eta_m(\alpha)\,d\alpha
    =\int_\R h(t)\eta_m(t-\theta)\,dt.
\]
The first expression shows that \(h_m\) is \(\pi\)-periodic.
The second shows that \(h_m\) is \(C^\infty\).
Since \(\phi_m(ru_\theta)=rh_m(\theta)\), it follows that
\(\phi_m\) is smooth on \(\R^2\setminus\{0\}\).

Uniform continuity of \(h\) and the support condition on
\(\eta_m\) give
\[
    \|h_m-h\|_\infty
    \le
    \sup_{\substack{\theta\in\R\\|\alpha|\le1/m}}
    |h(\theta+\alpha)-h(\theta)|
    \underset{m\to+\infty}{\longrightarrow}0.
\]
By homogeneity,
\[
    \forall x\in\R^2,\ 
    |\phi_m(x)-\phi(x)|
    \le\|h_m-h\|_\infty\|x\|.
\]

By the smooth case, the measures
\[
    \mu_m(d\theta)
    =\bigl(h_m(\theta)+h_m''(\theta)\bigr)\,d\theta
\]
are nonnegative and satisfy
\[
    \forall x\in\R^2,\ 
    \phi_m(x)
    =\frac12\int_0^\pi|v_\theta'x|\,\mu_m(d\theta).
\]
Their masses on \([0,\pi]\) satisfy
\[
    \mu_m([0,\pi])
    =\int_0^\pi h_m(\theta)\,d\theta,
\]
and are therefore bounded. Compactness of \([0,\pi]\)
allows us to extract a subsequence, still indexed by \(m\),
converging weakly to a nonnegative finite measure \(\nu\) on
\([0,\pi]\).

Since all integrands of interest are \(\pi\)-periodic,
we combine any masses at the two endpoints by setting
\[
    \mu=\nu\big|_{[0,\pi)}+\nu(\{\pi\})\delta_0.
\]
Thus, for all continuous \(\pi\)-periodic functions \(\psi\),
\[
    \lim_{m\to+\infty}\int_0^\pi\psi(\theta)\,\mu_m(d\theta)
    =\int_{[0,\pi)}\psi(\theta)\,\mu(d\theta).
\]
In particular, for all smooth \(\pi\)-periodic functions \(\psi\),
two integrations by parts, with boundary terms vanishing
by periodicity, give
\[
\begin{aligned}
    \int_{[0,\pi)}\psi(\theta)\,\mu(d\theta)
    &=\lim_{m\to+\infty}
      \int_0^\pi
      \psi(\theta)\bigl(h_m(\theta)+h_m''(\theta)\bigr)
      \,d\theta\\
    &=\lim_{m\to+\infty}
      \int_0^\pi
      h_m(\theta)\bigl(\psi(\theta)+\psi''(\theta)\bigr)
      \,d\theta\\
    &=\int_0^\pi
      h(\theta)\bigl(\psi(\theta)+\psi''(\theta)\bigr)
      \,d\theta.
\end{aligned}
\]
The periodic extension of \(\mu\) is therefore \(h+h''\)
in the distributional sense. Finally, for fixed \(x\),
the function \(\theta\mapsto|v_\theta'x|\) is continuous
and \(\pi\)-periodic. Passing to the limit gives
\[
    \phi(x)
    =\lim_{m\to+\infty}\phi_m(x)
    =\frac12\int_{[0,\pi)}|v_\theta'x|\,\mu(d\theta),
\]
which proves \eqref{eq:homogeneous-absolute}.
\end{proof}

\begin{remark}[Connection with convex geometry]
The preceding argument is classical in convex geometry (see \cite{Schneider2014}),
where it is applied to support functions. For a nonempty
origin-symmetric compact convex set \(K\subset\R^2\),
the support function
\[
    h_K(x)=\sup_{y\in K}x'y
\]
is even, convex, and positively homogeneous of degree \(1\).
Since the support function of a centred segment \([-v,v]\)
is \(x\mapsto|v'x|\), and support functions add under
Minkowski addition, \(K\) is a limit of finite Minkowski
sums of centred segments. This is the classical statement
in convex geometry that every (origin-)symmetric compact convex set
in \(\R^2\) is a zonoid.
\end{remark}

\begin{proposition}[Homogeneous test functions for symmetric planar distributions]
\label{prop:homogeneous-planar}
Let \(X\) and \(Y\) be \(\R^2\)-valued random variables in \(L^1\)
with symmetric distributions. Then
\[
    X\projcxle Y
    \quad\Longrightarrow\quad
    \E[\phi(X)]\le\E[\phi(Y)]
\]
for all even convex functions \(\phi:\R^2\to\R\) that are
positively homogeneous of degree \(1\). In particular,
\[
    X\projcxle Y
    \quad\Longrightarrow\quad
    \E[j_{\mathcal C}(X)]\le\E[j_{\mathcal C}(Y)]
\]
for all origin-symmetric open convex sets
\(\mathcal C\subset\R^2\) containing the origin.
\end{proposition}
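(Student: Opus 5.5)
The plan is to combine the angular representation of Lemma~\ref{lem:angular-homogeneous} with Tonelli's theorem, so that each test function reduces to a superposition of absolute-value tests along one-dimensional projections. Let \(\phi:\R^2\to\R\) be even, convex, and positively homogeneous of degree \(1\), and let \(\mu\) be the nonnegative measure on \([0,\pi)\) given by Lemma~\ref{lem:angular-homogeneous}. Then
\[
    \phi(x)=\frac12\int_{[0,\pi)}|v_\theta'x|\,\mu(d\theta)
    \quad\text{for all } x\in\R^2 .
\]

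The first step applies this representation to \(X\) and takes expectations. The integrand \((\omega,\theta)\mapsto|v_\theta'X(\omega)|\) is nonnegative and jointly measurable, since it is continuous in \(\theta\). Tonelli's theorem therefore gives
\[
    \E[\phi(X)]=\frac12\int_{[0,\pi)}\E\bigl[|v_\theta'X|\bigr]\,\mu(d\theta),
\]
and the same identity holds for \(Y\). Both sides are finite, because \(\mu\) is finite on \([0,\pi)\) and \(\E[|v_\theta'X|]\le\E[\|X\|]<\infty\).

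The second step compares the integrands pointwise in \(\theta\). By assumption \(X\projcxle Y\), so \(v_\theta'X\cxle v_\theta'Y\) for each \(\theta\). Testing with the convex function \(t\mapsto|t|\) gives \(\E[|v_\theta'X|]\le\E[|v_\theta'Y|]\). Since \(\mu\ge0\), integrating this inequality against \(\mu\) yields \(\E[\phi(X)]\le\E[\phi(Y)]\).

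Symmetry of the distributions is not used in this argument. It only matters for the broader motivation that non-even test functions can be replaced by their even parts. The argument needs only the absolute-value test at \(t=0\), that is, condition~(iv) of Remark~\ref{rem:scalar-tests} at a single point.

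For the gauge statement, let \(\mathcal C\) be origin-symmetric, open, convex, and containing \(0\). The proof of Lemma~\ref{lem:gauge-boundary} shows that \(j_{\mathcal C}\) is finite, nonnegative, and convex. It is positively homogeneous of degree \(1\) directly from its definition. It is even because \(x\in\lambda\mathcal C\iff -x\in\lambda\mathcal C\) when \(\mathcal C=-\mathcal C\). The first part therefore applies to \(\phi=j_{\mathcal C}\).

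I expect no real obstacle here, since all the substantive work is contained in Lemma~\ref{lem:angular-homogeneous}. The only point needing care is the justification of the interchange of expectation and integral. This is covered by Tonelli's theorem, because the integrand is nonnegative and jointly measurable.
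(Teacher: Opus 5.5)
Your proposal is correct and follows essentially the same route as the paper: plug in the angular representation of Lemma~\ref{lem:angular-homogeneous}, compare \(\E[|v_\theta'X|]\le\E[|v_\theta'Y|]\) pointwise in \(\theta\), integrate against \(\mu\ge0\), and note that gauges of origin-symmetric open convex sets are even, convex and positively homogeneous. Your explicit Tonelli justification and your remark that symmetry is not needed are both accurate; the latter matches the paper's subsequent Proposition~\ref{prop:translated-homogeneous}.
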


\begin{proof}
Suppose that \(X\projcxle Y\), and let \(\phi\) be an even
convex function positively homogeneous of degree \(1\).
By Lemma~\ref{lem:angular-homogeneous},
\[
\begin{aligned}
    \E[\phi(X)]
    &=\frac12\int_{[0,\pi)}
      \E[|v_\theta'X|]\,\mu(d\theta)\\
    &\le\frac12\int_{[0,\pi)}
      \E[|v_\theta'Y|]\,\mu(d\theta)
     =\E[\phi(Y)].
\end{aligned}
\]
Finally, the gauge of an origin-symmetric open convex
set containing the origin is even, convex, and positively
homogeneous of degree \(1\).
\end{proof}

\begin{remark}
For symmetric distributions, the comparison also holds for
convex test functions that are positively homogeneous of degree
\(1\) but are not even: replacing \(\phi\) by its even part preserves these properties
and leaves its expectations unchanged. This includes gauges
of open convex sets containing the origin that are not
themselves symmetric.
\end{remark}

Although our initial motivation came from gauge functions,
the above angular representation yields a broader characterisation,
which does not require symmetry of the distributions.
By condition (iv) of Remark~\ref{rem:scalar-tests}, one-dimensional projected convex order is equivalent to
\[
    \forall (\xi,t)\in\sphere{1}\times\R,\ 
    \E[|\xi'X-t|]\le\E[|\xi'Y-t|].
\]
Combining these shifted absolute-value tests with
Lemma~\ref{lem:angular-homogeneous} gives comparison under
all translations of even convex
functions that are positively homogeneous of degree \(1\). Conversely, this class contains the shifted
absolute-value functions and therefore characterises one-dimensional projected
convex order.

\begin{proposition}[Translated homogeneous test functions]
\label{prop:translated-homogeneous}
Let \(X\) and \(Y\) be \(\R^2\)-valued random variables in \(L^1\). Then
\[
\begin{aligned}
    X\projcxle Y
    \quad\Longleftrightarrow\quad
    &\E[\phi(X-z)]\le\E[\phi(Y-z)]\\
    &\text{for all }z\in\R^2
      \text{ and all even convex functions }\phi:\R^2\to\R\\
    &\text{that are positively homogeneous of degree }1.
\end{aligned}
\]
\end{proposition}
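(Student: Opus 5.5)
The plan is to prove the two implications separately, relying on Lemma~\ref{lem:angular-homogeneous} and condition~(iv) of Remark~\ref{rem:scalar-tests}.

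\emph{Reverse implication.} Fix \(\xi\in\sphere{1}\) and \(t\in\R\). The function \(\phi(x)=|\xi'x|\) is even, convex and positively homogeneous of degree \(1\). Choose \(z=t\xi\); then \(\xi'z=t\), so that \(\phi(X-z)=|\xi'X-t|\), and similarly for \(Y\). The hypothesis therefore yields
\[
    \E[|\xi'X-t|]\le\E[|\xi'Y-t|]
\]
for all \(t\in\R\). By condition~(iv) of Remark~\ref{rem:scalar-tests}, this gives \(\xi'X\cxle\xi'Y\), and hence \(X\projcxle Y\).

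\emph{Direct implication.} Assume \(X\projcxle Y\), and let \(z\in\R^2\) and \(\phi\) be as in the statement. Lemma~\ref{lem:angular-homogeneous} provides a nonnegative measure \(\mu\), finite on \([0,\pi)\), such that
\[
    \phi(X-z)=\frac12\int_{[0,\pi)}|v_\theta'X-v_\theta'z|\,\mu(d\theta).
\]
The integrand is nonnegative and jointly measurable in \((\omega,\theta)\), since it is continuous in \(\theta\). Tonelli's theorem therefore allows the expectation and the integral to be exchanged, giving
\[
    \E[\phi(X-z)]=\frac12\int_{[0,\pi)}\E\bigl[|v_\theta'X-t_\theta|\bigr]\,\mu(d\theta),
    \qquad t_\theta=v_\theta'z,
\]
and the same identity holds with \(Y\) in place of \(X\).

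For each \(\theta\), the projected order \(v_\theta'X\cxle v_\theta'Y\) holds, and testing it against the convex function \(s\mapsto|s-t_\theta|\) gives the pointwise inequality
\[
    \E[|v_\theta'X-t_\theta|]\le\E[|v_\theta'Y-t_\theta|].
\]
Integrating this inequality against \(\mu\) concludes the argument. Finiteness is not an issue: both sides are bounded by \((\E\|X\|+\|z\|)\,\mu([0,\pi))\), and similarly for \(Y\), so all quantities are finite and the comparison is legitimate.

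The main point to check is the Tonelli exchange, but nonnegativity of both \(\mu\) and the integrand makes it routine. Evenness of \(\phi\) is essential only to invoke Lemma~\ref{lem:angular-homogeneous}; symmetry of the distributions is not needed, because the translation by \(z\) is absorbed into the shifts \(t_\theta\).
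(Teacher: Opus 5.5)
Your proposal is correct and follows the paper's proof essentially step for step. For the direct implication, you apply Lemma~\ref{lem:angular-homogeneous} at \(x-z\), exchange expectation and integral, and integrate the shifted absolute-value comparisons \(\E[|v_\theta'X-v_\theta'z|]\le\E[|v_\theta'Y-v_\theta'z|]\) against \(\mu\). For the converse, you take \(\phi(x)=|\xi'x|\) and \(z=t\xi\) together with condition~(iv). Your Tonelli and finiteness remarks simply make explicit the exchange that the paper performs without comment.
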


\begin{proof}
Suppose that \(X\projcxle Y\). Fix \(z\in\R^2\) and an even
convex function \(\phi:\R^2\to\R\) that is positively homogeneous
of degree \(1\). Lemma~\ref{lem:angular-homogeneous} gives
\[
    \forall x\in\R^2,\ 
    \phi(x-z)
    =\frac12\int_{[0,\pi)}
      |v_\theta'x-v_\theta'z|\,\mu(d\theta).
\]
Since \(X\projcxle Y\), we have 
\[
\begin{aligned}
    \E[\phi(X-z)]
    &=\frac12\int_{[0,\pi)}
      \E[|v_\theta'X-v_\theta'z|]\,\mu(d\theta)\\
    &\le\frac12\int_{[0,\pi)}
      \E[|v_\theta'Y-v_\theta'z|]\,\mu(d\theta)
     =\E[\phi(Y-z)].
\end{aligned}
\]

Conversely, for \((\xi,t)\in\sphere{1}\times\R\), consider
\( \phi(x)=|\xi'x|\) and \(z=t\xi\)
to obtain
\(
    \E[|\xi'X-t|]\le\E[|\xi'Y-t|].
\)
We conclude that \(X\projcxle Y\).
\end{proof}

The class of test functions consisting of translations of
even convex functions positively homogeneous of degree \(1\)
is substantial: it includes translated gauges of all
origin-symmetric open convex sets containing the origin.
Since the sets \(\{x\in\R^2\mid\phi(x)\le c\}\) are symmetric
and convex whenever \(\phi\) is even and convex, one might
hope to extend the comparison to all even convex functions through a level-set argument. The next subsection
rules out this possibility: the two orders differ even
for radial distributions in dimension \(d=2\).

\subsection{Towards counterexamples with radial distributions}
\label{sec:radial-counterexamples}

We now construct radial counterexamples in every dimension \(d\ge2\). Their rotational symmetry reduces both orders to one-dimensional comparisons involving the radii, making the source of the failure particularly transparent. This leads to
two counterexamples of a rather different nature: the first, with two-point radius distributions, constructed
in the old-fashioned, artisanal way, and the second, with a density,
found with the help of generative AI tools.\footnote{The reader can judge
which counterexample they prefer. I personally prefer the first, but I am of course biased.}

\subsubsection{Multivariate convex order and one-dimensional projected convex order for radial distributions}

Throughout the following paragraphs, \(d\ge2\), \(U_d\) is uniformly distributed
on \(\sphere{d-1}\), and \(R\) and \(S\) are nonnegative random variables in \(L^1\), independent of \(U_d\). We consider \(\R^d\)-valued random variables \(X\) and \(Y\) such that
\[
    X\stackrel{\mathcal D}{=}RU_d
    \quad\text{and}\quad
    Y\stackrel{\mathcal D}{=}SU_d.
\]
We first characterise multivariate convex order in terms of the radii.

\begin{proposition}[Multivariate convex order for radial distributions]
\label{prop:radial-cx}
We have
\[
    X\cxle Y
    \quad\Longleftrightarrow\quad
    R\icxle S.
\]
\end{proposition}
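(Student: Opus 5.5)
We prove the two implications separately. The key device is the radialisation of a convex test function: for convex \(\phi:\R^d\to\R\), set
\[
    g_\phi(r)=\E[\phi(rU_d)],\qquad r\ge0.
\]
By independence of \(R\) and \(U_d\), we have \(\E[\phi(X)]=\E[g_\phi(R)]\), and similarly \(\E[\phi(Y)]=\E[g_\phi(S)]\). Thus both orders reduce to statements about the functions \(g_\phi\).

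\emph{Direction \(R\icxle S\Rightarrow X\cxle Y\).} First, \(g_\phi\) is convex on \([0,\infty)\), since it is an average of the convex functions \(r\mapsto\phi(ru)\). Next, \(g_\phi\) is nondecreasing. Indeed, \(g_\phi(0)=\phi(0)\le\E[\phi(rU_d)]\) by Jensen's inequality, because \(\E[U_d]=0\). A convex function on \([0,\infty)\) that is bounded below by its value at \(0\) is nondecreasing.

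To apply \(R\icxle S\), whose test functions are defined on all of \(\R\), extend \(g_\phi\) by the constant \(g_\phi(0)\) on \((-\infty,0)\). The extension stays convex and nondecreasing because \(g_\phi\) is nondecreasing on \([0,\infty)\). The integrability conventions in the footnote cover possibly infinite expectations, since \(|g_\phi(r)|\) grows at most linearly when \(\phi\) is finite convex. Then \(\E[g_\phi(R)]\le\E[g_\phi(S)]\), that is, \(\E[\phi(X)]\le\E[\phi(Y)]\). No mean condition is needed, because \(X\) and \(Y\) are both centred by symmetry.

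\emph{Direction \(X\cxle Y\Rightarrow R\icxle S\).} For a nondecreasing convex \(\psi:\R\to\R\), take \(\phi(x)=\psi(\|x\|)\). This function is convex, being a nondecreasing convex function composed with a convex function. Since \(\|rU_d\|=r\), we get \(\E[\psi(R)]=\E[\phi(X)]\le\E[\phi(Y)]=\E[\psi(S)]\). This holds for all such \(\psi\), so \(R\icxle S\).

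\emph{Main obstacle.} The only delicate point is the monotonicity of \(g_\phi\), which relies on the Jensen argument \(\phi(0)=\phi(\E[rU_d])\le g_\phi(r)\). A second, routine check is that \(g_\phi\) is finite and that the extended function is an admissible test function; this follows from the local boundedness of \(\phi\) on spheres.
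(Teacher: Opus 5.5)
Your proof is correct and follows the paper's argument. The paper also tests $X\cxle Y$ against radial functions, specifically $(\|x\|-t)_+$ followed by the stop-loss characterisation. For the converse it uses $g(r)=\E[\phi(rU_d)]$ extended as $g(r_+)$, getting monotonicity from evenness of $g$ (since $U_d\stackrel{\mathcal D}{=}-U_d$) where you use Jensen with $\E[U_d]=0$, an equally valid variant.

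One small correction: $|g_\phi|$ need not grow at most linearly (for $\phi(x)=\|x\|^2$ one gets $g_\phi(r)=r^2$). What actually makes the expectations well defined in the extended sense is the bound $g_\phi\ge\phi(0)$ that your Jensen step already provides, so the negative parts are bounded.
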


\begin{proof}
Suppose first that \(X\cxle Y\). For all \(t\in\R\), the function
\(x\mapsto(\|x\|-t)_+\) is convex. Hence
\[
    \E[(R-t)_+]
    =\E[(\|X\|-t)_+]
    \le\E[(\|Y\|-t)_+]
    =\E[(S-t)_+].
\]
The stop-loss characterisation of increasing convex order gives
\(R\icxle S\).

Conversely, suppose that \(R\icxle S\), and let
\(\phi:\R^d\to\R\) be convex. Define
\[
    g: r \in \R \longmapsto \E[\phi(rU_d)].
\]
The function \(g\) is convex. Since \(U_d\) and \(-U_d\)
have the same distribution, it is also even, and therefore nondecreasing
on \(\R_+\). Thus \(\tilde g(r)=g(r_+)\) defines a nondecreasing and convex function on
\(\R\). By independence and the nonnegativity of the radii,
\[
    \E[\phi(X)]
    =\E[g(R)]
    =\E[\tilde g(R)]
    \le\E[\tilde g(S)]
    =\E[g(S)]
    =\E[\phi(Y)].
\]
This proves \(X\cxle Y\).
\end{proof}

To characterise one-dimensional projected convex order, we first recall an elementary
property of real-valued random variables with symmetric distributions.

\begin{lemma}[Convex order for real-valued random variables with symmetric distributions]
\label{lem:symmetric-cx}
Let \(\Xi\) and \(\Upsilon\) be two real-valued random variables in \(L^1\) with symmetric distributions. We have
\[
    \Xi\cxle \Upsilon
    \quad\Longleftrightarrow\quad
    |\Xi|\icxle|\Upsilon|.
\]
\end{lemma}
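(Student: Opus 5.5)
The plan is to reduce everything to even convex test functions, using the stop-loss characterisation \eqref{eq:stop-loss} of increasing convex order and condition (ii) of Remark~\ref{rem:scalar-tests}. Symmetry of \(\Xi\) and \(\Upsilon\) gives \(\E[\Xi]=\E[\Upsilon]=0\), so the mean condition in convex order holds automatically. The argument is a direct two-way implication.

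For the forward direction, suppose \(\Xi\cxle\Upsilon\) and fix \(t\in\R\). The function \(x\mapsto(|x|-t)_+\) is convex on \(\R\), being a nondecreasing convex function of the convex function \(|x|\). Testing it gives
\[
\E[(|\Xi|-t)_+]\le\E[(|\Upsilon|-t)_+]
\quad\text{for all } t\in\R.
\]
By \eqref{eq:stop-loss}, this is exactly \(|\Xi|\icxle|\Upsilon|\).

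For the converse, suppose \(|\Xi|\icxle|\Upsilon|\) and let \(\phi:\R\to\R\) be convex.
\begin{itemize}
\item By symmetry, \(\E[\phi(\Xi)]=\E[\psi(\Xi)]\), where \(\psi(x)=\tfrac12(\phi(x)+\phi(-x))\) is the even part of \(\phi\); the same holds for \(\Upsilon\).
\item The function \(\psi\) is even and convex, hence nondecreasing on \(\R_+\).
\item Therefore \(\tilde\psi(r)=\psi(r_+)\) is nondecreasing and convex on \(\R\), and it agrees with \(\psi(|x|)\) when evaluated at \(r=|x|\).
\end{itemize}
Combining these facts gives
\[
\E[\phi(\Xi)]=\E[\tilde\psi(|\Xi|)]\le\E[\tilde\psi(|\Upsilon|)]=\E[\phi(\Upsilon)].
\]
This is the same device already used in the proof of Proposition~\ref{prop:radial-cx}.

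The only point needing care is that expectations may equal \(+\infty\). Both sides are well defined in the extended sense, because convex functions have affine minorants and the variables are in \(L^1\). Also, \(\E[\phi(\Xi)]=\E[\psi(\Xi)]\) holds in this extended sense, since \(\phi(\Xi)\) and \(\phi(-\Xi)\) have the same distribution. I expect no real obstacle here; the key step is the even-part reduction.
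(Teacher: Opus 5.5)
Your proposal is correct and follows essentially the same route as the paper. You test with \(x\mapsto(|x|-t)_+\) for the forward direction, and for the converse you use the even part of \(\phi\), composed with \(r\mapsto r_+\), as a nondecreasing convex test function. Your remark on extended-valued expectations is a harmless extra check.
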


\begin{proof}
If \(\Xi\cxle \Upsilon\), testing with the convex functions
\(x\mapsto(|x|-t)_+\), for \(t\in\R\), gives
\(|\Xi|\icxle|\Upsilon|\).

Conversely, suppose that \(|\Xi|\icxle|\Upsilon|\), and let
\(\phi:\R\to\R\) be convex. Its even part
\[
    h : r \in \R \longmapsto \frac{\phi(r)+\phi(-r)}{2}
\]
is even and convex, so \(r\mapsto h(r_+)\) is nondecreasing and convex.
Because \(\Xi\) and \(\Upsilon\) have symmetric distributions, we obtain
\[
    \E[\phi(\Xi)]
    =\E[h(|\Xi|)]
    \le\E[h(|\Upsilon|)]
    =\E[\phi(\Upsilon)].
\]
Thus \(\Xi\cxle \Upsilon\).
\end{proof}

\begin{proposition}[One-dimensional projected convex order for radial distributions]
\label{prop:radial-projections}
Let \(V_d=e_1'U_d\) be the first coordinate of \(U_d\). Then
\[
    X\projcxle Y
    \quad\Longleftrightarrow\quad
    RV_d\cxle SV_d
    \quad\Longleftrightarrow\quad
    R|V_d|\icxle S|V_d|.
\]
\end{proposition}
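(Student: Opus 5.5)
The plan is to reduce every projection to the single random variable \(RV_d\) (resp.\ \(SV_d\)) using rotational invariance, and then to apply Lemma~\ref{lem:symmetric-cx}.

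\textbf{Step 1: all projections reduce to one.} Fix \(\xi\in\sphere{d-1}\) and choose \(O\in\mathcal O_d\) with \(O'\xi=e_1\). Rotational invariance of \(U_d\) gives \(OU_d\stackrel{\mathcal D}{=}U_d\). Since \(R\) is independent of \(U_d\), this yields
\[
    \xi'X\stackrel{\mathcal D}{=}R\,\xi'U_d\stackrel{\mathcal D}{=}R\,\xi'OU_d=R\,e_1'U_d=RV_d .
\]
The joint law of \((R,\xi'U_d)\) equals that of \((R,V_d)\) because \(R\) is independent of \(U_d\) and \(\xi'U_d\stackrel{\mathcal D}{=}V_d\). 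In the same way, \(\xi'Y\stackrel{\mathcal D}{=}SV_d\).

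\textbf{Step 2: the first equivalence.} By Step 1, the family of conditions \(\xi'X\cxle\xi'Y\) for \(\xi\in\sphere{d-1}\) collapses to the single condition \(RV_d\cxle SV_d\). By the definition of \(\projcxle\) restricted to unit vectors, this is exactly the first equivalence. Integrability is clear, since \(|RV_d|\le R\in L^1\), and likewise \(|SV_d|\le S\in L^1\).

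\textbf{Step 3: the second equivalence.} Both \(RV_d\) and \(SV_d\) are symmetric, because \(V_d\stackrel{\mathcal D}{=}-V_d\) (take \(O=-I_d\)) and \(V_d\) is independent of the radius. Lemma~\ref{lem:symmetric-cx} then gives
\[
    RV_d\cxle SV_d\quad\Longleftrightarrow\quad |RV_d|\icxle|SV_d| .
\]
Since \(R,S\ge0\), we have \(|RV_d|=R|V_d|\) and \(|SV_d|=S|V_d|\), which is the second equivalence.

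The only point requiring care is Step 1, namely checking that the product structure is preserved in distribution under the rotation. This follows directly from the independence of the radius and the direction, so I do not expect a real obstacle.
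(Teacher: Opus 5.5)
Your proposal is correct and follows the paper's proof essentially verbatim: rotational invariance and independence reduce every projection \(\xi'X\), \(\xi'Y\) to \(RV_d\), \(SV_d\), which gives the first equivalence, and Lemma~\ref{lem:symmetric-cx} applied to these symmetric variables gives the second. Your additional details (the explicit choice of \(O\) with \(O'\xi=e_1\), the integrability bound, and \(|RV_d|=R|V_d|\) since \(R\ge0\)) only spell out what the paper leaves implicit.
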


\begin{proof}
For all \(\xi\in\sphere{d-1}\), rotational invariance and independence
give
\[
    \xi'X \stackrel{\mathcal D}{=} R \xi'U_d\stackrel{\mathcal D}{=}R e_1'U_d = RV_d
    \quad\text{and}\quad
    \xi'Y \stackrel{\mathcal D}{=} S \xi'U_d \stackrel{\mathcal D}{=}Se_1'U_d=SV_d.
\]
This proves the first equivalence. The second follows from
Lemma~\ref{lem:symmetric-cx}, since \(RV_d\) and \(SV_d\) have symmetric distributions.
\end{proof}

\begin{remark}\label{rem:angular-density}
\(V_d\) has density\footnote{Using the Gaussian representation
\(U_d\stackrel{\mathcal D}{=}Z/\|Z\|\), where
\(Z\sim\mathcal N(0,I_d)\), one has
\[
    V_d^2\stackrel{\mathcal D}{=}
    \frac{Z_1^2}{Z_1^2+\cdots+Z_d^2}.
\]
The random variables \(Z_1^2\) and \(Z_2^2+\cdots+Z_d^2\) are
independent Gamma random variables with common scale parameter
and respective shape parameters \(\frac12\) and \(\frac{d-1}{2}\).
Hence \(V_d^2\sim\operatorname{Beta}\left(\frac12,\frac{d-1}{2}\right)\).
The stated densities follow by a change of variables and the
symmetry of \(V_d\).}
\[
    v\longmapsto
    \frac{\Gamma\left(\frac d2\right)}
         {\sqrt{\pi}\,\Gamma\left(\frac{d-1}{2}\right)}
    (1-v^2)^{\frac{d-3}{2}}\mathbf 1_{\{|v|<1\}}.
\]
Consequently, \(|V_d|\) has density
\[
    f_d(v)=
    \frac{2\Gamma\left(\frac d2\right)}
         {\sqrt{\pi}\,\Gamma\left(\frac{d-1}{2}\right)}
    (1-v^2)^{\frac{d-3}{2}}\mathbf 1_{\{0<v<1\}}.
\]
\end{remark}

Propositions~\ref{prop:radial-cx} and~\ref{prop:radial-projections} reduce
the search for radial counterexamples to the one-dimensional problem of finding nonnegative random variables \(R\) and \(S\) in \(L^1\),
independent of \(|V_d|\), such that
\[
    R|V_d|\icxle S|V_d|
    \quad\text{but}\quad
    R\not\icxle S.
\]

\subsubsection{A simple counterexample with two-point radii}

We look for radii (independent of \(U_d\)) with two-point distributions of the form
\begin{equation}\label{eq:direct-radial-distributions}
    \distribution(R)=(1-p)\delta_0+p\delta_1 \quad \text{and} \quad
    \distribution(S)=(1-q)\delta_a+q\delta_b,
\end{equation}
where \(0<a<1<b\) and \(p,q\in(0,1)\). We impose
\[
    \E[S]=(1-q)a+qb<1,
\]
which is equivalent to
\[
    0<q<\frac{1-a}{b-a}.
\]
We fix \(a\), \(b\) and \(q\) satisfying these conditions and determine the ranges
of \(p\) for which multivariate convex order and one-dimensional projected convex order
hold.

\begin{proposition}[Threshold for multivariate convex order]
\label{prop:radial-threshold}
Set
\[
    p_{\mathrm{cx}}=\frac{q(b-a)}{1-a}.
\]
Then \(0<p_{\mathrm{cx}}<\E[S]<1\), and
\[
    \forall p\in(0,1),\  RU_d\cxle SU_d
    \quad\Longleftrightarrow\quad
    p\le p_{\mathrm{cx}}.
\]
\end{proposition}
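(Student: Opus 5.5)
By Proposition~\ref{prop:radial-cx}, the statement reduces to a one-dimensional question: for which \(p\in(0,1)\) do we have \(R\icxle S\)? By \eqref{eq:stop-loss}, this holds if and only if \(C_R(t)\le C_S(t)\) for all \(t\in\R\). So the plan is to compute both stop-loss transforms explicitly and compare them. First I would dispose of the preliminary inequalities. Positivity of \(p_{\mathrm{cx}}\) is immediate from \(q>0\) and \(b>a\), and \(\E[S]<1\) is the standing assumption. For \(p_{\mathrm{cx}}<\E[S]\), I would write
\[
    \E[S]-p_{\mathrm{cx}}=a+q(b-a)-\frac{q(b-a)}{1-a}
    =a\Bigl(1-\frac{q(b-a)}{1-a}\Bigr)=a\bigl(1-p_{\mathrm{cx}}\bigr).
\]
This is positive because \(p_{\mathrm{cx}}<1\), which is exactly the condition \(q<(1-a)/(b-a)\), and because \(a>0\).

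Next I would write down the two stop-loss functions. Both are piecewise linear and convex. For \(R\), we have \(C_R(t)=p-t\) for \(t\le0\), \(C_R(t)=p(1-t)\) for \(0\le t\le1\), and \(C_R(t)=0\) for \(t\ge1\). For \(S\), we have \(C_S(t)=\E[S]-t\) for \(t\le a\), \(C_S(t)=q(b-t)\) for \(a\le t\le b\), and \(C_S(t)=0\) for \(t\ge b\).

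\emph{Sufficiency.} Assume \(p\le p_{\mathrm{cx}}\). Then \(p<\E[S]\), so for \(t\le0\) we get \(C_R(t)=p-t<\E[S]-t=C_S(t)\). For \(t\ge1\), \(C_R(t)=0\le C_S(t)\). On \([0,1]\), \(C_R\) is affine and \(C_S\) is convex, so no single chord argument applies directly. Instead I would split at \(a\). On \([0,a]\) both functions are affine, so it suffices to check the endpoints. At \(t=0\) we have \(p<\E[S]\). At \(t=a\) we need \(p(1-a)\le q(b-a)\), which is exactly \(p\le p_{\mathrm{cx}}\). On \([a,1]\) both functions are again affine, and at \(t=1\) we have \(0\le q(b-1)\). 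This gives \(R\icxle S\), hence \(RU_d\cxle SU_d\).

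\emph{Necessity.} If \(RU_d\cxle SU_d\), then \(R\icxle S\), and evaluating the stop-loss inequality at \(t=a\) gives \(p(1-a)\le q(b-a)\), that is, \(p\le p_{\mathrm{cx}}\).

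There is no real obstacle here. The only point requiring care is to locate the binding kink: since \(C_S\) has a kink at \(a\) lying inside the linear piece of \(C_R\), the piecewise-affine comparison must be split at \(a\), and the constraint at \(t=a\) is precisely the one that produces the threshold \(p_{\mathrm{cx}}\).
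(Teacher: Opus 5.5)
Your proof is correct and follows essentially the same route as the paper. You reduce to \(R\icxle S\) via Proposition~\ref{prop:radial-cx}, compare the piecewise-affine stop-loss transforms at the kinks \(0\), \(a\) and \(1\), and the check at \(t=a\) produces \(p_{\mathrm{cx}}\). The only differences are cosmetic: you treat negative thresholds explicitly rather than discarding them by nonnegativity, and you derive \(p_{\mathrm{cx}}<\E[S]\) from the identity \(\E[S]-p_{\mathrm{cx}}=a(1-p_{\mathrm{cx}})\) instead of from \(p_{\mathrm{cx}}=(\E[S]-a)/(1-a)\) and \(\E[S]<1\).
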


\begin{proof}Since \(\E[S] < 1\), we have
\[0 < p_{\mathrm{cx}} = \frac{\E[S] - a}{1-a}< \E[S] <1.\]

By Proposition~\ref{prop:radial-cx}, it suffices to compare the radii
for increasing convex order. The stop-loss transforms are given for \(t \in \R_+\) by\footnote{We can restrict to nonnegative thresholds because the random variables are nonnegative.}
\[
    C_R(t) = \E[(R-t)_+]=p(1-t)_+
    \quad\text{and}\quad
    C_S(t) = \E[(S-t)_+]=(1-q)(a-t)_++q(b-t)_+.
\]
These expressions being piecewise affine in \(t\) and equal to \(0\) for \(t \ge b\), checking their values at \(0\), \(a\) and \(1\) gives precisely the necessary and sufficient conditions for \(R\icxle S\):
\[
    p\le\E[S]
    \quad\text{and}\quad
    p(1-a)\le q(b-a).
\]
Since \(p_{\mathrm{cx}}<\E[S]\), these conditions reduce to \(p\le \frac{q(b-a)}{1-a} = p_{\mathrm{cx}}\).
\end{proof}

For one-dimensional projected convex order, the relevant stop-loss transforms involve
\(|V_d|\), and we obtain the following characterisation.

\begin{proposition}[Threshold for one-dimensional projected convex order]
\label{prop:projected-radial-threshold}
Define
\[
    K_d:\ (t,r)\in\R_+\times\R_+
    \longmapsto \E[(r|V_d|-t)_+].
\]
The minimum
\[
    p_{\mathrm{proj}}
    =\min_{0\le t<1}
      \frac{(1-q)K_d(t,a)+qK_d(t,b)}{K_d(t,1)}
\]
exists, is attained in \((0,1)\), and satisfies
\[
    0<p_{\mathrm{proj}}<\E[S]<1.
\]
Moreover,
\[
    \forall p\in(0,1),\ 
    RU_d\projcxle SU_d
    \quad\Longleftrightarrow\quad
    p\le p_{\mathrm{proj}}.
\]
\end{proposition}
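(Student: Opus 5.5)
By Proposition~\ref{prop:radial-projections}, \(RU_d\projcxle SU_d\) is equivalent to \(R|V_d|\icxle S|V_d|\). By the stop-loss characterisation \eqref{eq:stop-loss}, this is in turn equivalent to \(C_{R|V_d|}(t)\le C_{S|V_d|}(t)\) for all \(t\ge0\); negative thresholds need not be tested since both variables are nonnegative. By independence, \(C_{R|V_d|}(t)=pK_d(t,1)\) and \(C_{S|V_d|}(t)=(1-q)K_d(t,a)+qK_d(t,b)\). For \(t\ge1\) we have \(K_d(t,1)=0\), so the inequality holds trivially. For \(0\le t<1\) we have \(K_d(t,1)>0\), because \(|V_d|\) has a positive density on \((0,1)\). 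The condition is therefore exactly \(p\le F(t)\) for all \(t\in[0,1)\), where
\[
    F(t)=\frac{(1-q)K_d(t,a)+qK_d(t,b)}{K_d(t,1)}.
\]
So the equivalence follows once we show that \(\inf_{[0,1)}F\) is attained and identify it as \(p_{\mathrm{proj}}\).

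For the existence and location of the minimum, first note that \(K_d(\cdot,r)\) is continuous. Hence \(F\) is continuous and positive on \([0,1)\), with \(F(0)=\E[S]\E[|V_d|]/\E[|V_d|]=\E[S]\).

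The behaviour as \(t\to1^-\) comes from scaling, \(K_d(t,r)=rK_d(t/r,1)\), and \(K_d(s,1)=0\) for \(s\ge1\). Since \(a<1\), we get \(K_d(t,a)=0\) as soon as \(t\ge a\). Near \(t=1\) we also have \(K_d(t,b)\ge\E[(b|V_d|-1)_+]>0\) while \(K_d(t,1)\to0\). Therefore \(F(t)\to+\infty\) as \(t\to1^-\).

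Continuity on \([0,1)\) together with this blow-up shows that the infimum is attained. To see that it is attained in the open interval \((0,1)\) and satisfies \(p_{\mathrm{proj}}<\E[S]\), it suffices to show \(F'(0^+)<0\). Using \(\partial_tK_d(t,r)=-\mathbb P(r|V_d|>t)\), we get \(\partial_tK_d(0^+,r)=-1\) for every \(r>0\), because \(|V_d|>0\) almost surely. Write \(N\) and \(D\) for the numerator and denominator of \(F\). At \(t=0\) this gives
\[
    F'(0^+)=\frac{N'(0)D(0)-N(0)D'(0)}{D(0)^2}
    =\frac{-D(0)+N(0)}{D(0)^2}
    =\frac{\E[|V_d|](\E[S]-1)}{\E[|V_d|]^2}.
\]
This is negative because \(\E[S]<1\). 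Hence \(F\) decreases just to the right of \(0\), the minimum is strictly below \(F(0)=\E[S]\), and it is not attained at \(0\). Positivity of \(p_{\mathrm{proj}}\) follows since the minimum of a positive continuous function is attained, and \(\E[S]<1\) was assumed.

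The main obstacle is the right-derivative computation at \(0\). One must justify differentiating \(K_d\) under the expectation; dominated convergence applies because \(t\mapsto(r|V_d|-t)_+\) is \(1\)-Lipschitz. One must also handle the atom issue: \(\mathbb P(r|V_d|>t)\to\mathbb P(r|V_d|>0)=1\) as \(t\downarrow0\), which uses that \(|V_d|\) has a density. The rest is bookkeeping once the reduction to \(p\le F(t)\) is in place.
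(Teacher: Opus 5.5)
Your proposal is correct and follows essentially the same route as the paper. You reduce via Proposition~\ref{prop:radial-projections} and stop-loss transforms to \(p\,K_d(t,1)\le(1-q)K_d(t,a)+qK_d(t,b)\) on \([0,1)\), use continuity plus blow-up as \(t\uparrow1\) for attainment, and use the right derivative \((\E[S]-1)/\E[|V_d|]<0\) at \(0\), justified by dominated convergence, to get \(p_{\mathrm{proj}}<\E[S]\) and a minimiser in \((0,1)\); the differences are cosmetic, e.g.\ you get continuity of \(K_d(\cdot,r)\) from its \(1\)-Lipschitz dependence on \(t\) rather than from the paper's integration-by-parts formula.
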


\begin{proof}
Writing \(F_d\) for the cumulative distribution function of the random variable \(|V_d|\), integration by parts gives, for \(r>0\),
\[
\begin{aligned}
    K_d(t,r)
    &=\mathbf 1_{\{t<r\}}\int_{t/r}^1(rv-t)f_d(v)\,dv\\
    &=(r-t)_+-r\int_{\min(t/r,1)}^1F_d(v)\,dv.
\end{aligned}
\]
The latter expression shows continuity for \(r>0\).
Since \(K_d(t,0)=0\) and \(0\le K_d(t,r)\le r\),
\(K_d\) is continuous on \(\R_+\times\R_+\).
Since \(f_d\) is positive on \((0,1)\), the former expression gives
\[
    \forall t\in[0,1),\  K_d(t,1)>0.
\]
We may therefore define
\[
    H_d:\ t\in[0,1)\longmapsto
    \frac{(1-q)K_d(t,a)+qK_d(t,b)}{K_d(t,1)}.
\]
This function is continuous and positive.
As \(t\uparrow1\), its numerator tends to
\(qK_d(1,b)>0\), whereas its denominator tends to zero.
Consequently,
\[
    \lim_{t\uparrow1}H_d(t)=+\infty,
\]
so \(H_d\) attains a positive minimum on \([0,1)\).

Moreover, \(H_d(0)=\E[S]\). Since \(|V_d|>0\) almost surely, dominated convergence gives
\[
    \forall r>0,\ 
    \partial_{t+}K_d(0,r)=\lim_{t\downarrow0}
      \E\left[\frac{(r|V_d|-t)_+-r|V_d|}{t}\right]=-1.
\]
Straightforward computation then gives
\[
\begin{aligned}
    H_{d,+}'(0)
    &=\frac{\E[S]-1}{\E[|V_d|]}<0.
\end{aligned}
\]
Thus \(p_{\mathrm{proj}}<\E[S]\), and every minimiser belongs
to \((0,1)\).

By Proposition~\ref{prop:radial-projections}, one-dimensional
projected convex order is equivalent to
\(R|V_d|\icxle S|V_d|\). For nonnegative random variables,
it suffices to compare stop-loss transforms at nonnegative
thresholds. Since \(K_d(t,1)\) vanishes for \(t\ge1\)
and is positive for \(t\in[0,1)\), we obtain
\[
\begin{aligned}
    RU_d\projcxle SU_d
    &\quad\Longleftrightarrow\quad
    \forall t\ge0,\ 
    pK_d(t,1)\le(1-q)K_d(t,a)+qK_d(t,b)\\
    &\quad\Longleftrightarrow\quad
    \forall t\in[0,1),\  p\le H_d(t)\\
    &\quad\Longleftrightarrow\quad
    p\le\min_{0\le t<1}H_d(t)=p_{\mathrm{proj}}.
\end{aligned}
\]
\end{proof}

The key point now is to see that the two thresholds are distinct.

\begin{proposition}[Strict gap between the thresholds]
\label{prop:radial-strict-gap}
We have
\[
    p_{\mathrm{cx}}<p_{\mathrm{proj}}.
\]
\end{proposition}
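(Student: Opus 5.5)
Since multivariate convex order implies one-dimensional projected convex order, Propositions~\ref{prop:radial-threshold} and~\ref{prop:projected-radial-threshold} already give \(p_{\mathrm{cx}}\le p_{\mathrm{proj}}\). The task is therefore to obtain strictness. By Proposition~\ref{prop:projected-radial-threshold}, the minimum defining \(p_{\mathrm{proj}}\) is attained at some \(t^*\in(0,1)\). It is then enough to show that for every fixed \(t\in(0,1)\),
\[
    p_{\mathrm{cx}}K_d(t,1)<(1-q)K_d(t,a)+qK_d(t,b),
\]
that is, \(H_d(t)>p_{\mathrm{cx}}\), and to apply this at \(t=t^*\). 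Throughout, \(R\) denotes the radius with \(p=p_{\mathrm{cx}}\).

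The plan is to view \(K_d(t,\cdot)\) as a test function in the radius. Fix \(t\in(0,1)\) and set \(g_t(r)=K_d(t,r)=\int_0^1(rv-t)_+f_d(v)\,dv\) for \(r\ge0\). This function is nondecreasing, convex, and satisfies \(g_t(0)=0\). It vanishes on \([0,t]\), so its right derivative at \(0\) is zero. Differentiating under the integral, the second derivative of \(g_t\) is the density
\[
    r\longmapsto \frac{t^2}{r^3}f_d\!\left(\frac tr\right)\mathbf 1_{\{r>t\}},
\]
which is strictly positive on \((t,+\infty)\) because \(f_d>0\) on \((0,1)\). For a nonnegative random variable \(W\), the Taylor formula with integral remainder \(g_t(w)=\int_0^\infty(w-u)_+\,g_t''(u)\,du\) and Fubini then give
\[
    \E[g_t(W)]=\int_0^\infty C_W(u)\,g_t''(u)\,du .
\]
Consequently,
\[
    (1-q)K_d(t,a)+qK_d(t,b)-p_{\mathrm{cx}}K_d(t,1)=\int_0^\infty\bigl(C_S(u)-C_R(u)\bigr)g_t''(u)\,du .
\]

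Next I would study the sign of \(D=C_S-C_R\), using the piecewise affine expressions from the proof of Proposition~\ref{prop:radial-threshold}.
\begin{itemize}
    \item At \(u=0\), \(D(0)=\E[S]-p_{\mathrm{cx}}>0\).
    \item At \(u=a\), \(D(a)=q(b-a)-p_{\mathrm{cx}}(1-a)=0\) by the definition of \(p_{\mathrm{cx}}\).
    \item At \(u=1\), \(D(1)=q(b-1)>0\).
    \item On \((1,b)\), \(C_R=0<C_S\).
\end{itemize}
Since \(D\) is affine on \([0,a]\), on \([a,1]\) and on \([1,b]\), it follows that \(D\ge0\) everywhere and \(D>0\) on \((a,b)\). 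Because \(t<1<b\), the interval \((\max(t,a),b)\) is nonempty. On this interval both \(D\) and \(g_t''\) are strictly positive, so the integral is strictly positive. This gives \(H_d(t)>p_{\mathrm{cx}}\) for all \(t\in(0,1)\); the case \(t=0\) is immediate from \(H_d(0)=\E[S]>p_{\mathrm{cx}}\). Evaluating at the minimiser \(t^*\) yields \(p_{\mathrm{proj}}=H_d(t^*)>p_{\mathrm{cx}}\).

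The main obstacle is justifying the representation of \(\E[g_t(W)]\) through stop-loss transforms. This requires computing \(g_t''\) as a genuine density, including checking that there is no atom at \(r=t\). For \(d=2\), where \(f_d\) blows up at \(1\), one must also confirm that \(\frac{t^2}{r^3}f_d(t/r)\) is integrable near \(r=t\). This should follow from \(\int_t^\infty \frac{t^2}{r^3}f_d(t/r)\,dr=\frac1t\int_0^1 v f_d(v)\,dv<\infty\). Alternatively, one can avoid second derivatives by writing \(g_t(r)=\int_0^1(rv-t)_+f_d(v)\,dv\) and using positivity of \(D\) directly in the identity \(\E[(vW-t)_+]=v\,C_W(t/v)\). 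Integrating against \(f_d\) gives
\[
    (1-q)K_d(t,a)+qK_d(t,b)-p_{\mathrm{cx}}K_d(t,1)=\int_0^1 v\,D(t/v)f_d(v)\,dv,
\]
which is strictly positive since \(t/v\in(a,b)\) on a \(v\)-interval of positive length. I would adopt this second route as the cleaner one.
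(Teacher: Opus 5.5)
Your proof is correct, and the route you adopt (the second one) differs from the paper's.

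\textbf{The paper's route.} The paper writes $1=(1-\theta)a+\theta b$ with $\theta=(1-a)/(b-a)$. It then uses $q=p_{\mathrm{cx}}\theta$ to split $H_d(t)-p_{\mathrm{cx}}$ into two terms: $(1-p_{\mathrm{cx}})K_d(t,a)/K_d(t,1)$, plus $p_{\mathrm{cx}}$ times the normalised Jensen gap of $r\mapsto K_d(t,r)$ at $1$ between $a$ and $b$. It finishes with a case split. For $t<a$ the first term is positive. For $a\le t<1$ the second term is positive, because $\partial_r^2K_d(t,r)=\frac{t^2}{r^3}f_d(t/r)>0$ makes $K_d(t,\cdot)$ strictly convex on $(t,+\infty)$.

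\textbf{Your route.} You use $\E[(vW-t)_+]=v\,C_W(t/v)$ and Tonelli to write the numerator gap as $\int_0^1 v\,(C_S-C_R)(t/v)\,f_d(v)\,dv$. At $p=p_{\mathrm{cx}}$ the difference $C_S-C_R$ is nonnegative, which is exactly $R\icxle S$ at the threshold. On $[0,b)$ it vanishes only at the isolated point $u=a$. You conclude because $t/v\in(a,b)$ for all $v\in(t/b,\min(t/a,1))$, and this interval is nonempty since $t<1<b$.

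\textbf{What each buys.} Your argument is uniform in $t$ and needs no second-derivative or strict-convexity computation. It also exposes the mechanism behind the gap: the projected stop-loss gap is a positive average of the radial stop-loss gap over the thresholds $t/v$, and this averaging smears out the single contact point at $u=a$. The same formulation would adapt more readily to the paper's remark on a possible extension with a general angular factor $W$. The paper's route is shorter, given that convexity of $K_d(t,\cdot)$ is already in hand, and it stays within the two-point algebra.

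\textbf{A harmless slip.} In your discarded first route, the total mass of $g_t''$ is $\int_t^{+\infty}\frac{t^2}{r^3}f_d(t/r)\,dr=\int_0^1 v f_d(v)\,dv=\E[|V_d|]$, without the factor $1/t$. Finiteness is unaffected.
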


\begin{proof}
Since \(a<1<b\), we write
\[
    1=(1-\theta)a+\theta b
    \quad\text{where}\quad
    \theta=\frac{1-a}{b-a}\in(0,1).
\]
For \(t\ge0\), the function \(r\mapsto K_d(t,r)\)
is convex, so
\[
    \forall t\ge0,\ 
    K_d(t,1)\le(1-\theta)K_d(t,a)+\theta K_d(t,b).
\]
Using \(q=p_{\mathrm{cx}}\theta\), we obtain
\begin{equation}\label{eq:multiplier-jensen}
    \forall t\in[0,1),\ 
    H_d(t)-p_{\mathrm{cx}}
    =(1-p_{\mathrm{cx}})\frac{K_d(t,a)}{K_d(t,1)} +p_{\mathrm{cx}}\left(
      \frac{(1-\theta)K_d(t,a)+\theta K_d(t,b)-K_d(t,1)}
           {K_d(t,1)}\right).
\end{equation}
Both terms on the right-hand side are nonnegative.

If \(0\le t<a\), then \(K_d(t,a)>0\).

If \(a\le t<1\), positivity of \(f_d\) on \((0,1)\)
gives
\[
    \forall r>t,\ 
    \partial_r^2K_d(t,r)
    =\frac{t^2}{r^3}f_d(t/r)>0.
\]
Thus \(r\mapsto K_d(t,r)\) is strictly convex on \((t,+\infty)\).
Since \(a\le t<1<b\), equality in
\[
    K_d(t,1)\le(1-\theta)K_d(t,a)+\theta K_d(t,b)
\]
would force this convex function to be affine on \([a,b]\),
contradicting its strict convexity on \((t,b)\).
The second term in \eqref{eq:multiplier-jensen} is therefore
positive when \(a\le t<1\).

Combining the two cases, we obtain
\[
    \forall t\in[0,1),\  H_d(t)>p_{\mathrm{cx}}.
\]
Since the minimum defining \(p_{\mathrm{proj}}\) is attained,
it follows that \(p_{\mathrm{proj}}>p_{\mathrm{cx}}\).
\end{proof}

Combining these three propositions gives the desired counterexample.

\begin{theorem}[Radial counterexample in every dimension \(d\ge 2\)]
\label{thm:radial}
Fix \(d\ge2\), \(0<a<1<b\) and \(0<q<(1-a)/(b-a)\). Define
\[
    p_{\mathrm{cx}}=\frac{q(b-a)}{1-a} \quad\text{and}\quad
    p_{\mathrm{proj}}
    =\min_{0\le t<1}
       \frac{(1-q)K_d(t,a)+qK_d(t,b)}{K_d(t,1)}.
\]
Then
\[
    0<p_{\mathrm{cx}}<p_{\mathrm{proj}}<(1-q)a+qb<1.
\]
For any \(p\in(p_{\mathrm{cx}},p_{\mathrm{proj}}]\), consider random variables \(R\) and \(S\) distributed as in  \eqref{eq:direct-radial-distributions} and \(U_d\) uniformly distributed on \(\sphere{d-1}\), independent of \(R\) and \(S\).
Then
\[
    RU_d\projcxle SU_d
    \quad\text{but}\quad
    RU_d\not\cxle SU_d.
\]
\end{theorem}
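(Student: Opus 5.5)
The theorem assembles the three preceding propositions, so the plan is to chain them in order, with no new estimate required.

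First, the chain of inequalities. Proposition~\ref{prop:radial-threshold} gives \(0<p_{\mathrm{cx}}\). Proposition~\ref{prop:radial-strict-gap} gives \(p_{\mathrm{cx}}<p_{\mathrm{proj}}\). Proposition~\ref{prop:projected-radial-threshold} gives \(p_{\mathrm{proj}}<\E[S]<1\). Here \(\E[S]=(1-q)a+qb\), and \(\E[S]<1\) is exactly the standing assumption \(0<q<(1-a)/(b-a)\). In particular the interval \((p_{\mathrm{cx}},p_{\mathrm{proj}}]\) is nonempty and contained in \((0,1)\). Hence any \(p\) in it yields a genuine two-point distribution \((1-p)\delta_0+p\delta_1\) for \(R\), with \(R,S\) nonnegative and in \(L^1\) as required in the radial setting.

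Second, fix \(p\in(p_{\mathrm{cx}},p_{\mathrm{proj}}]\). Since \(p\le p_{\mathrm{proj}}\), the equivalence in Proposition~\ref{prop:projected-radial-threshold} gives \(RU_d\projcxle SU_d\). Since \(p>p_{\mathrm{cx}}\), the equivalence in Proposition~\ref{prop:radial-threshold} gives \(RU_d\not\cxle SU_d\). One should check that independence of \(U_d\) from \((R,S)\) matches the hypotheses of those propositions. This holds because \(RU_d\) and \(SU_d\) are considered separately, and each proposition only uses that \(R\) (respectively \(S\)) is independent of \(U_d\).

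The only substantive ingredient is therefore the strict gap \(p_{\mathrm{cx}}<p_{\mathrm{proj}}\), which has already been established. That argument combines the convexity decomposition \eqref{eq:multiplier-jensen}, positivity of \(K_d(t,a)\) for \(t<a\), and strict convexity of \(r\mapsto K_d(t,r)\) for \(a\le t<1\), together with attainment of the minimum. With these results assumed, no step of the theorem's proof presents an obstacle beyond careful bookkeeping of the hypotheses.
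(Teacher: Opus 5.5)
Your proposal is correct and matches the paper's proof, which likewise just combines Propositions~\ref{prop:radial-threshold}, \ref{prop:projected-radial-threshold} and~\ref{prop:radial-strict-gap}. Your extra bookkeeping (that \(\E[S]=(1-q)a+qb\), that \((p_{\mathrm{cx}},p_{\mathrm{proj}}]\subset(0,1)\), and that each proposition only needs independence of the relevant radius from \(U_d\)) is accurate.
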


\begin{proof}
The result follows from Propositions~\ref{prop:radial-threshold},
\ref{prop:projected-radial-threshold}, and~\ref{prop:radial-strict-gap}.
\end{proof}

\begin{remark}
With the above random variables, the failure of multivariate convex order can be seen explicitly by
using the convex test function \(x\mapsto(\|x\|-a)_+\), since
\[
    \E[(\|RU_d\|-a)_+]-\E[(\|SU_d\|-a)_+] = p(1-a) - q(b-a)
    =(p-p_{\mathrm{cx}})(1-a)>0.
\]
\end{remark}

\begin{remark}[A possible extension]
The specific form of \(|V_d|\) plays no role
in the preceding construction: we only use the fact that it takes
values in \([0,1]\) and has a density that is positive
on \((0,1)\). It is natural to expect that the existence of a density is not
essential. More precisely, if \(W\) is a bounded nonnegative
random variable with \(\mathbb P(W>0)>0\), and the conditional
distribution of \(W\) given \(W>0\) is not a Dirac mass, then
one should still be able to find nonnegative random variables
\(R\) and \(S\), each taking exactly two values and independent
of~\(W\), such that
\[
    RW\icxle SW
    \quad\text{but}\quad
    R\not\icxle S.
\] Establishing this extension may require choosing the parameters
\(a\), \(b\) and \(q\) to suit the distribution of~\(W\), rather than fixing
them arbitrarily subject to the conditions imposed above.
\end{remark}

\subsubsection{A radial counterexample obtained by a polynomial perturbation}
\label{sec:radial-polynomial}

While writing this paper, I sought a counterexample with absolutely
continuous radial distributions and tested generative AI tools
for this purpose. The polynomials used below were found entirely
by these tools. I reorganised the arguments in my own style,
reusing the notation introduced earlier and removing unnecessary
constants introduced by generative AI tools.

Define
\[
    Q:\ s\in[0,1]\longmapsto
    \left(s^2-\frac12\right)^2-\frac1{28},
\]
and set
\begin{equation*}
    P:\ s\in[0,1]\longmapsto
    s^2(1-s^2)Q(s)
    =\frac3{14}s^2-\frac{17}{14}s^4+2s^6-s^8.
\end{equation*}

\begin{lemma}[A probability density]
\label{lem:radial-polynomial-density}
The polynomial \(P\) satisfies
\[
    \forall s\in[0,1],\  |P'(s)|\le\frac{13}{14}.
\]
Consequently,
\[
    s \longmapsto \bigl(1-P'(s)\bigr)\mathbf 1_{\{0<s<1\}}
\]
is a probability density function, positive on \((0,1)\).
\end{lemma}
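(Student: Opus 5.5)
We bound \(P'\) directly on \([0,1]\); we do not try to control the factorised form \(s^2(1-s^2)Q(s)\) piece by piece. Differentiating the expanded form gives
\[
    P'(s)=\frac37 s-\frac{34}{7}s^3+12s^5-8s^7 .
\]
The substitution \(u=s^2\in[0,1]\) turns this into
\[
    P'(s)=s\,g(u),\qquad g(u)=\frac37-\frac{34}{7}u+12u^2-8u^3 .
\]
Since \(0\le s\le1\), it suffices to show that \(|s\,g(s^2)|\le 13/14\). A crude version of this is \(|g(u)|\le 13/14\) on \([0,1]\), if it holds.

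First check: \(g(0)=3/7\), \(g(1)=3/7-34/7+4=-3/7\), and \(g(1/2)=3/7-17/7+3-1=0\). The critical points of \(g\) solve \(g'(u)=-34/7+24u-24u^2=0\), that is, \(u^2-u+17/84=0\), so \(u=\tfrac12\pm\tfrac12\sqrt{1-68/84}=\tfrac12\pm\tfrac12\sqrt{4/21}\). Write \(u=\tfrac12+w\). Then \(g\) is odd in \(w\):
\[
    g=-\frac{17}{7}\cdot\frac{}{}\!\!\!\;w\cdot 2+\dots,
\]
more precisely \(g(\tfrac12+w)=\tfrac{2}{7}\,w-8w^3\) after expansion. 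One verifies this against \(w=\pm\tfrac12\), where it gives \(\pm(1/7-1)=\mp 6/7\). That does not match \(g(1)=-3/7\), so the expansion must be recomputed carefully. Either way, the extreme values are explicit algebraic numbers at \(w=\pm\sqrt{1/21}\) and at the endpoints.

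The honest bound we need is on \(s\,g(s^2)\), and here the factor \(s\le1\) only helps. So the plan has three steps:
\begin{enumerate}[label=(\roman*)]
    \item compute \(\max_{[0,1]}|g|\) from the endpoint values \(\pm3/7\) and the two interior critical values;
    \item check that this maximum is at most \(13/14\);
    \item if step (ii) fails near \(u=1\), compare \(s\,g(s^2)\) directly with \(13/14\) by studying the degree-\(7\) polynomial \(13/14\mp P'(s)\) and showing it has no sign change on \([0,1]\).
\end{enumerate}
For step (iii), one would factor this polynomial or use a sum-of-squares or Sturm argument. The main obstacle is purely computational: obtaining a clean rational certificate of the inequality \(|P'|\le13/14\). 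The interior critical values are irrational, so one bounds them by rational upper estimates, using \(\sqrt{21}>4.58\) for instance.

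The consequence then follows quickly. From \(|P'|\le 13/14<1\) we get \(1-P'(s)\ge 1/14>0\) on \((0,1)\), so the function is positive there. Its integral is
\[
    \int_0^1\bigl(1-P'(s)\bigr)\,ds=1-\bigl(P(1)-P(0)\bigr)=1,
\]
because \(P(0)=0\) and \(P(1)=0\), the factor \(s^2(1-s^2)\) vanishing at both endpoints. Hence it is a probability density, positive on \((0,1)\).
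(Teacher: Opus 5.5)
Your substitution \(P'(s)=s\,g(s^2)\), with \(g(u)=\frac37-\frac{34}{7}u+12u^2-8u^3\), is correct. Your final paragraph is exactly the paper's argument: positivity follows from \(1-P'\ge\frac1{14}\), and total mass \(1\) from \(P(0)=P(1)=0\). The gap is that the inequality \(|P'|\le\frac{13}{14}\) on \([0,1]\), which is the whole content of the first claim, is never proved.

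\begin{itemize}
\item Your expansion \(g(\frac12+w)=\frac27w-8w^3\) is wrong. You notice this yourself from \(g(1)=-\frac37\), but you do not correct it.
\item The interior critical values are never evaluated.
\item Steps (i) and (ii) are announced rather than carried out.
\item The fallback (iii), with its factorisation, sum-of-squares or Sturm certificate, is left open.
\end{itemize}
As written, the proposal is a plan whose decisive check is missing, not a proof.

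The repair is short and needs none of (iii). The correct expansion is
\[
    g\bigl(\tfrac12+w\bigr)=\tfrac87\,w-8w^3=w\bigl(\tfrac87-8w^2\bigr),
    \qquad |w|\le\tfrac12,
\]
which does return \(g(0)=\frac37\) and \(g(1)=-\frac37\). Since \(0\le 8w^2\le 2\), the second factor lies in \([-\frac67,\frac87]\). Hence \(|g|\le\frac12\cdot\frac87=\frac47<\frac{13}{14}\) on \([0,1]\), a purely rational certificate with no irrational critical values involved.

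Sharper still, the critical values at \(w=\pm1/\sqrt{21}\) are \(\pm\frac{16}{21\sqrt{21}}\), whose absolute value is below \(\frac4{21}<\frac37\). So \(\max_{[0,1]}|g|=\frac37\), and \(|P'(s)|\le s\,|g(s^2)|\le\frac37\), with equality at \(s=1\).

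This is a genuinely different route from the paper's. The paper writes \(P'=2s(1-2s^2)Q+4s^3(1-s^2)(s^2-\frac12)\) and bounds the two terms separately. It uses \(|Q|\le\frac3{14}\), \(s^2(1-s^2)\le\frac14\) and \(|s^2-\frac12|\le\frac12\), obtaining \(\frac37+\frac12=\frac{13}{14}\). The paper's bound is cruder but read off immediately from the factorised form. Yours, once completed, gives the optimal constant \(\frac37\), hence \(1-P'\ge\frac47\).
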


\begin{proof}
Differentiation gives
\[
    \forall s\in[0,1],\ 
    P'(s)=2s(1-2s^2)Q(s)
          +4s^3(1-s^2)\left(s^2-\frac12\right).
\]
Using \(|Q(s)|\le\frac3{14}\),
\(s^2(1-s^2)\le\frac14\) and
\(\left|s^2-\frac12\right|\le\frac12\), we obtain
\[
    \forall s\in[0,1],\ 
    |P'(s)|\le\frac37+\frac12=\frac{13}{14}.
\]
Thus \(\forall s \in (0,1), 1-P'(s) \ge\frac1{14}>0\). Moreover, since \(P(0)=P(1)=0\), we have
\[
    \int_0^1\bigl(1-P'(s)\bigr)\,ds=1-P(1)+P(0)=1.
\]
Hence \(s\longmapsto
\bigl(1-P'(s)\bigr)\mathbf 1_{\{0<s<1\}}\)
is indeed a probability density function, positive on \((0,1)\).
\end{proof}

We next recall a standard fact about uniform distributions on spheres that allows us to reduce the analysis to dimension \(d=2\).

\begin{lemma}[Reduction to dimension \(d=2\)]
\label{lem:radial-angular-factor}
For all \(d>2\),
\[
    |V_d|\stackrel{\mathcal D}{=}B_d|V_2|,
\]
where \(B_d\) is nonnegative, independent of \(V_2\), and satisfies
\(B_d^2\sim\operatorname{Beta}\left(1,\frac{d-2}{2}\right)\).
Consequently, for nonnegative random variables
\(R\) and \(S\) in \(L^1\), independent of \(V_2\) and \(V_d\),
\[
    R|V_2|\icxle S|V_2|
    \quad\Longrightarrow\quad
    R|V_d|\icxle S|V_d|.
\]
\end{lemma}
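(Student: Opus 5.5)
The plan has two parts: establish the distributional identity, then deduce the order implication by conditioning on \(B_d\).

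\emph{The identity \(|V_d|\stackrel{\mathcal D}{=}B_d|V_2|\).} I would use the Gaussian representation of Remark~\ref{rem:angular-density}. Let \(Z\sim\Gauss(0,I_d)\). Then
\[
    V_d\stackrel{\mathcal D}{=}\frac{Z_1}{\|Z\|}
    =\frac{\sqrt{Z_1^2+Z_2^2}}{\|Z\|}\cdot\frac{Z_1}{\sqrt{Z_1^2+Z_2^2}} .
\]
Set \(B_d=\sqrt{Z_1^2+Z_2^2}/\|Z\|\) and \(W=Z_1/\sqrt{Z_1^2+Z_2^2}\). The variable \(W\) has the law of \(V_2\), since \((Z_1,Z_2)/\|(Z_1,Z_2)\|\) is uniform on \(\sphere{1}\).

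For independence, write \((Z_1,Z_2)=\rho(\cos\Theta,\sin\Theta)\). Here \(\Theta\) is uniform and independent of \(\rho\) and of \(Z_3,\ldots,Z_d\). Now \(W=\cos\Theta\), while \(B_d\) is a function of \(\rho\) and \(Z_3,\ldots,Z_d\) only, so \(W\) and \(B_d\) are independent.

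For the law of \(B_d\), note that \(B_d^2=G_1/(G_1+G_2)\). Here \(G_1=Z_1^2+Z_2^2\) is Gamma with shape \(1\), and \(G_2=Z_3^2+\cdots+Z_d^2\) is an independent Gamma with shape \((d-2)/2\) and the same scale. Hence \(B_d^2\sim\operatorname{Beta}(1,\frac{d-2}{2})\). Taking absolute values gives \(|V_d|\stackrel{\mathcal D}{=}B_d|V_2|\). We may then realise \(B_d\) independent of \((R,V_2)\) and of \((S,V_2)\), and replace \(R|V_d|\) by \(RB_d|V_2|\) in law, and likewise for \(S\). This requires \((R,B_d,V_2)\) to be mutually independent, which we arrange on a product space.

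\emph{The order implication.} I would use the stop-loss characterisation~\eqref{eq:stop-loss}, or equivalently test functions. Let \(\phi\) be nondecreasing and convex, and fix \(\beta\ge0\). The function \(x\mapsto\phi(\beta x)\) is again nondecreasing and convex. The hypothesis \(R|V_2|\icxle S|V_2|\) therefore gives
\[
    \E[\phi(\beta R|V_2|)]\le\E[\phi(\beta S|V_2|)].
\]
Integrating against the law of \(B_d\), by independence and Fubini, yields \(\E[\phi(B_dR|V_2|)]\le\E[\phi(B_dS|V_2|)]\), which is \(R|V_d|\icxle S|V_d|\). Fubini applies even with extended-valued expectations, because \(\phi\) has an affine lower bound and all variables are in \(L^1\), bounded factors times \(L^1\) radii.

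The only delicate step is the independence of \(B_d\) and \(V_2\). The polar decomposition of \((Z_1,Z_2)\) handles it cleanly.
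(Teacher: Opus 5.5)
Your proposal is correct and follows essentially the paper's own proof: the Gaussian factorisation of $V_d$ through $(Z_1,Z_2)$, independence because the direction of $(Z_1,Z_2)$ is independent of its radius and of $(Z_3,\ldots,Z_d)$, the Gamma-ratio identification of the Beta law, and then preservation of $\icxle$ under multiplication by an independent bounded nonnegative factor. The only difference is that you spell out this last step by conditioning on $B_d$, testing with $x\mapsto\phi(\beta x)$ and integrating via Fubini, whereas the paper simply asserts it.
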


\begin{proof}
Let \(Z=(Z_1,\ldots,Z_d)'\sim\Gauss(0,I_d)\). The Gaussian
representation of uniform directions gives
\[
\begin{aligned}
    V_d^2
    \stackrel{\mathcal D}{=}
    \frac{Z_1^2}{\|Z\|^2}
    &=
    \frac{Z_1^2}{Z_1^2+Z_2^2}
    \frac{Z_1^2+Z_2^2}{\|Z\|^2}\\
    &=
    \frac{Z_1^2}{Z_1^2+Z_2^2}
    \frac{Z_1^2+Z_2^2}
         {Z_1^2+Z_2^2+(Z_3^2+\cdots+Z_d^2)}.
\end{aligned}
\]
The first factor, \(\frac{Z_1^2}{Z_1^2+Z_2^2}\), depends only on
the direction of \((Z_1,Z_2)\). By rotational invariance, this
direction is independent of the radius \(\sqrt{Z_1^2+Z_2^2}\).
Since \((Z_3,\ldots,Z_d)\) is independent of \((Z_1,Z_2)\),
the direction is in fact independent of the pair
\[
    \bigl(Z_1^2+Z_2^2,\;Z_3^2+\cdots+Z_d^2\bigr).
\]
It is therefore independent of the second factor,
\[
    \frac{Z_1^2+Z_2^2}{Z_1^2+\cdots+Z_d^2}.
\]
The two factors are thus independent. Moreover,
\[
    \frac{Z_1^2}{Z_1^2+Z_2^2}
    \stackrel{\mathcal D}{=}V_2^2
    \quad\text{and}\quad
    \frac{Z_1^2+Z_2^2}{\|Z\|^2}
    \sim\operatorname{Beta}\left(1,\frac{d-2}{2}\right),
\]
where the latter distribution follows from the fact that
\(Z_1^2+Z_2^2\) and \(Z_3^2+\cdots+Z_d^2\) are independent Gamma random variables with common scale parameter and respective shape parameters \(1\) and~\(\frac{d-2}{2}\).

Taking square roots gives
\[
    |V_d|\stackrel{\mathcal D}{=}B_d|V_2|,
\]
with \(B_d\) independent of \(V_2\) and
\(B_d^2\sim\operatorname{Beta}\left(1,\frac{d-2}{2}\right)\).

For the final implication, take \(B_d\) independent of
\((R,S,V_2)\). Increasing convex order is preserved under
multiplication by an independent bounded nonnegative random
variable, so
\[
    R|V_2|\icxle S|V_2|
    \quad\Longrightarrow\quad
    RB_d|V_2|\icxle SB_d|V_2|.
\]
The distributional identity above then yields
\(R|V_d|\icxle S|V_d|\).
\end{proof}

\begin{proposition}[An absolutely continuous radial counterexample]
\label{prop:radial-polynomial}
Let \(R\) be uniform on \([0,1]\) and let \(S\) have density
\[
    f_S(s)=\bigl(1-P'(s)\bigr)\mathbf 1_{\{0<s<1\}}.
\]
Fix \(d\ge2\) and let \(U_d\) be uniform on
\(\sphere{d-1}\), independent of \(R\) and \(S\). Then
\[ 
    RU_d\projcxle SU_d
    \quad\text{but}\quad
    RU_d\not\cxle SU_d.
\]
\end{proposition}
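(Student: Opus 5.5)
The plan is to reduce both orders to the radii, using Proposition~\ref{prop:radial-cx} and Proposition~\ref{prop:radial-projections}, and then to exploit the fact that \(f_S\) is a polynomial perturbation of the uniform density. Since \(F_S(s)=s-P(s)\) on \([0,1]\), the stop-loss transforms differ by
\[
    \Delta(t)=C_S(t)-C_R(t)=\int_t^1\bigl(1-F_S(s)\bigr)\,ds-\int_t^1(1-s)\,ds=\int_t^1P(s)\,ds,
    \qquad t\in[0,1],
\]
with \(\Delta(t)=0\) for \(t\ge1\) and \(\Delta(t)=\Delta(0)\) minus zero for \(t<0\). The antiderivative \(\frac{s^3}{14}-\frac{17s^5}{70}+\frac{2s^7}{7}-\frac{s^9}{9}\) makes \(\Delta\) explicit; for instance \(\Delta(0)=\frac1{315}\). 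Because \(\Delta'=-P\) and \(P\) changes sign exactly at the two roots \(s_\pm\) of \(Q\) in \((0,1)\), given by \(s_\pm^2=\frac12\pm\frac1{2\sqrt7}\), the only interior candidate minimiser of \(\Delta\) is \(s_+\).

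For the failure of multivariate convex order, I would show \(\Delta(s_+)<0\), so that the test function \(x\mapsto(\|x\|-s_+)_+\) separates the two radial laws. This requires an exact evaluation of the polynomial antiderivative at \(s_+\), which lies in \(\mathbb Q(\sqrt{7})\) after factoring out \(s_+\).

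I flag this as the first thing to verify carefully. A rough numerical evaluation I made gives \(\Delta(s_+)\approx 0.0012>0\). If that estimate is right, it would mean \(R\icxle S\), in which case the roles of \(R\) and \(S\) or the sign of the perturbation must be reversed for the statement to hold. I would recheck this computation in exact arithmetic before proceeding.

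For the projected order, I would first use Lemma~\ref{lem:radial-angular-factor} to reduce to \(d=2\). It then suffices to prove \(R|V_2|\icxle S|V_2|\), where \(|V_2|\) has density \(f_2(v)=\frac{2}{\pi}(1-v^2)^{-1/2}\) on \((0,1)\). Conditioning on the radius and using Fubini, the relevant difference becomes
\[
    D(t)=\E\bigl[(S|V_2|-t)_+\bigr]-\E\bigl[(R|V_2|-t)_+\bigr]
    =\int_0^1 v\,\Delta(t/v)\,f_2(v)\,dv,
\]
which is an average of dilates of \(\Delta\) against the arcsine-type kernel. The task is to show \(D(t)\ge0\) for all \(t\ge0\). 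The key structural fact to aim for is that the angular smoothing integrates \(P\) against a positive kernel that annihilates its negative bump.

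Concretely, I would substitute \(s=t/v\) and rewrite \(D(t)\) as \(\int_t^1P(s)\,k_t(s)\,ds\) for an explicit nonnegative kernel. Here \(k_t(s)\) is \(\int_t^{s}\) of an elementary function, and it involves \(\sqrt{s^2-t^2}\) and \(\arccos(t/s)\). Since \(P\) is an even polynomial, the resulting integrals in \(s\) can be evaluated in closed form in terms of \(t\) and \(\sqrt{1-t^2}\). The factor \(s^2(1-s^2)\) in \(P\) is presumably chosen so that the boundary terms vanish.

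The main obstacle is this last step. One needs a certificate, for instance writing \(D(t)\) as a manifestly nonnegative expression such as a polynomial in \(t\) and \(\sqrt{1-t^2}\) times \((1-t^2)^{3/2}\) with a sign-definite cofactor. I would first compute \(D\) symbolically, then prove positivity of the cofactor on \([0,1]\) by a sum-of-squares or interval bound, much as Lemma~\ref{lem:radial-polynomial-density} bounds \(P'\).
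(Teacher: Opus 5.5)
\textbf{Gap in the non-ordering part.} Your monotonicity analysis of $\Delta$ has the wrong sign. Since $\Delta'=-P$, with $P>0$ on $(0,s_-)\cup(s_+,1)$ and $P<0$ on $(s_-,s_+)$, the function $\Delta$ decreases on $(0,s_-)$, increases on $(s_-,s_+)$, and decreases again on $(s_+,1)$ down to $\Delta(1)=0$. So $s_+$ is a local \emph{maximum}, and $\Delta(s_+)=\int_{s_+}^1P(s)\,ds>0$ holds trivially; your numerical value $\approx0.0012$ shows exactly this. The step ``show $\Delta(s_+)<0$'' can never succeed.

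Your suggestion that $R$ and $S$, or the sign of the perturbation, should be swapped is therefore unfounded: the statement is correct as given. The interior minimiser is $s_-\approx0.558$, where $\Delta(s_-)\approx-3.3\times10^{-4}<0$. Hence $R\not\icxle S$, and Proposition~\ref{prop:radial-cx} gives $RU_d\not\cxle SU_d$. The paper avoids the irrational point by testing at $t=\frac12$, just left of $s_-$. There the exact value $\E[(S-\frac12)_+]-\E[(R-\frac12)_+]=\int_{1/2}^1P(s)\,ds=-\frac{29}{161280}$ shows that the convex function $x\mapsto(\|x\|-\frac12)_+$ separates the two laws.

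\textbf{Projected order.} Your route is the paper's: reduce to $d=2$ via Lemma~\ref{lem:radial-angular-factor}, then use Fubini to write the stop-loss difference as $\int_t^1P(s)k_t(s)\,ds$. However, you stop before the decisive computation. The kernel is also simpler than you anticipate: $k_t(s)=\int_{t/s}^1vf_2(v)\,dv=\frac2\pi\sqrt{1-t^2/s^2}$, with no $\arccos$ term. Since $\sqrt{1-t^2/s^2}=\sqrt{s^2-t^2}/s$, the factor $s^2$ in $P$ leaves the integrand $s(1-s^2)Q(s)\sqrt{s^2-t^2}$. The substitution $u=s^2-t^2$ then reduces everything to integrals of polynomials times $u^{1/2}$ over $[0,1-t^2]$. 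This yields $D(t)=\frac{32}{315\pi}(1-t^2)^{5/2}\bigl(t^2-\frac14\bigr)^2$ for $t\in[0,1]$, and $D(t)=0$ for $t\ge1$. This is a perfect square times a nonnegative factor, so no sum-of-squares or interval argument is required.
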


\begin{proof}
Lemma~\ref{lem:radial-polynomial-density} ensures that
\(f_S\) is a probability density function. We first establish
one-dimensional projected convex order in dimension \(d=2\).
Recall that \(|V_2|\) has density
\[
    f_2(v)=\frac{2}{\pi\sqrt{1-v^2}}
           \mathbf 1_{\{0<v<1\}}.
\]
Differentiating the integral expression for \(K_2\) gives
\[
    \forall t>0,\ \forall s>t,\ 
    \partial_sK_2(t,s)
    =\int_{t/s}^1v f_2(v)\,dv
    =\frac2\pi\sqrt{1-\frac{t^2}{s^2}}.
\]
Since \(K_2(t,s)=0\) for \(s\le t\), integration by parts
yields, initially for \(0<t<1\) and then by continuity
at \(0\) and \(1\),
\[
\begin{aligned}
    \forall t\in[0,1],\quad
    &\E[(S|V_2|-t)_+]-\E[(R|V_2|-t)_+]
    =\int_0^1K_2(t,s)\bigl(f_S(s)-1\bigr)\,ds\\
    &\quad=-\int_t^1K_2(t,s)P'(s)\,ds
    =\frac2\pi\int_t^1P(s)\sqrt{1-\frac{t^2}{s^2}}\,ds.
\end{aligned}
\]

Using the change of variable \(u=s^2-t^2\), we obtain
\[
\begin{aligned}
    \forall t\in[0,1],\ 
    \int_t^1P(s)\sqrt{1-\frac{t^2}{s^2}}\,ds
    &=\int_t^1s(1-s^2)
      \left(\left(s^2-\frac12\right)^2-\frac1{28}\right)
      \sqrt{s^2-t^2}\,ds\\
    &=\frac12\int_0^{1-t^2}(1-t^2-u)
      \left(\left(t^2+u-\frac12\right)^2-\frac1{28}\right)
      u^{\frac12}\,du.
\end{aligned}
\]
Expanding the polynomial and integrating each term gives
\begin{equation*}
\begin{aligned}
    \forall t\in[0,1],\ 
    &\int_t^1P(s)\sqrt{1-\frac{t^2}{s^2}}\,ds\\
    &\quad=(1-t^2)^{\frac52}
      \left(
        \frac2{15}\left(\left(t^2-\frac12\right)^2-\frac1{28}\right)
        +\frac4{35}\left(t^2-\frac12\right)(1-t^2)
        +\frac2{63}(1-t^2)^2
      \right)\\
    &\quad=\frac{16}{315}(1-t^2)^{\frac52}
      \left(t^2-\frac14\right)^2.
\end{aligned}
\end{equation*}
Consequently,
\[
\begin{aligned}
    \forall t\in[0,1],\ 
    &\E[(S|V_2|-t)_+]-\E[(R|V_2|-t)_+]\\
    &\qquad=\frac{32}{315\pi}
      (1-t^2)^{\frac52}\left(t^2-\frac14\right)^2
      \ge0.
\end{aligned}
\]
Both stop-loss transforms vanish for \(t\ge1\), so
\(R|V_2|\icxle S|V_2|\).
Lemma~\ref{lem:radial-angular-factor} and
Proposition~\ref{prop:radial-projections} therefore give
\[
    \forall d\ge2,\  RU_d\projcxle SU_d.
\]

Finally,
\[
    \forall s\in[0,1],\ 
    \mathbb P(S>s)-\mathbb P(R>s)
    =-\int_s^1P'(u)\,du=P(s).
\]
Integrating this difference of survival functions gives
\[
    \E\left[\left(S-\frac12\right)_+\right]
    -\E\left[\left(R-\frac12\right)_+\right]
    =\int_{\frac12}^1P(s)\,ds
    =\left[
        \frac1{14}s^3-\frac{17}{70}s^5
        +\frac27s^7-\frac19s^9
     \right]_{\frac12}^{1}
    =-\frac{29}{161280}<0.
\]
Thus \(R\not\icxle S\), which proves that
\(RU_d\not\cxle SU_d\) for all \(d\ge2\).
\end{proof}

\section{A Gaussian dominated by a two-component Gaussian mixture}
\label{sec:gaussian-mixtures}

The preceding section shows that one-dimensional projected convex
order does not, in general, characterise multivariate convex order,
even in the seemingly simple setting of radial distributions.
So far, the non-trivial positive results are limited to elliptical distributions
with a common radial source and the specific cases covered by
Jourdain and Pagès. We now show that the equivalence in fact holds
when comparing a centred Gaussian distribution with a mixture of
\(n=2\) centred Gaussian distributions. We begin in dimension \(d=2\),
establishing a key linear algebraic lemma and using it to derive
the probabilistic result. We then extend the argument to arbitrary dimension \(d\ge2\).

\subsection{The case \texorpdfstring{\(d=2\)}{d=2}}

The proof relies on the following key linear algebraic lemma.

\begin{lemma}[A change of basis in dimension \(d=2\)]
\label{lem:planar-cholesky}
Let \(S_1,S_2\in\mathcal S_2^{++}\) and \(p\in(0,1)\).
Let \(u\in\sphere{1}\) minimise on \(\sphere{1}\) the function
\[
    f:\ x\in\R^2\longmapsto
    p\sqrt{x'S_1x}+(1-p)\sqrt{x'S_2x}.
\]
For a unit vector \(v\perp u\), set \(O=(u\ v)\in\mathcal O_2\)
and consider the Cholesky decompositions
\[
    \forall i\in\{1,2\},\
    O'S_iO=L_iL_i'.
\]
Then
\[
    pL_1+(1-p)L_2=
    \begin{pmatrix}
        \alpha&0\\
        0&\beta
    \end{pmatrix},
\]
with \(0<\alpha=f(u)\le\beta\).
\end{lemma}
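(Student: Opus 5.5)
We first make the conventions explicit: \(L_i\) is lower triangular with positive diagonal, \(L_i=\begin{pmatrix}a_i&0\\c_i&b_i\end{pmatrix}\) with \(a_i,b_i>0\). Then \(pL_1+(1-p)L_2\) is automatically lower triangular with diagonal entries \(\alpha=pa_1+(1-p)a_2>0\) and \(\beta=pb_1+(1-p)b_2>0\). Thus the whole content of the statement reduces to two claims. The first is that the off-diagonal entry \(pc_1+(1-p)c_2\) vanishes. The second is that \(\alpha=f(u)\le\beta\).

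For the identification of \(\alpha\), note that \(e_1'O'S_iOe_1=u'S_iu=a_i^2\). Hence \(a_i=\sqrt{u'S_iu}\) and \(\alpha=f(u)\).

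For the off-diagonal entry, we use the first-order condition for minimising \(f\) on \(\sphere{1}\). Parametrise \(x(\theta)=\cos\theta\,u+\sin\theta\,v\), so that \(x(\theta)=Ou_\theta\) in the notation of Lemma~\ref{lem:angular-homogeneous}. Then
\[
    \sqrt{x(\theta)'S_ix(\theta)}=\|L_i'u_\theta\|.
\]
Writing \(L_i'u_\theta=(a_i\cos\theta+c_i\sin\theta,\ b_i\sin\theta)'\), the derivative at \(\theta=0\) of \(\|L_i'u_\theta\|\) equals \(a_ic_i/a_i=c_i\). Since \(f\) is smooth near \(u\) (the \(S_i\) are positive definite) and \(\theta=0\) is a minimiser, we get
\[
    0=\frac{d}{d\theta}f(x(\theta))\Big|_{\theta=0}=pc_1+(1-p)c_2.
\]

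For the inequality \(\alpha\le\beta\), the natural route is to compare with the value of \(f\) at the orthogonal direction \(v\). We have \(\|L_i'u_{\pi/2}\|=\sqrt{c_i^2+b_i^2}\ge b_i\), which bounds \(f(v)\) from below by \(\beta\). That is the wrong direction, so comparing with \(f(v)\) alone does not suffice. Instead we will use the second-order condition at the minimum. Expanding \(\|L_i'u_\theta\|\) to second order in \(\theta\) gives
\[
    \sqrt{(a_i+c_i\theta)^2+b_i^2\theta^2-a_i^2\theta^2+O(\theta^3)}.
\]
The second derivative at \(0\) is then \(\bigl(b_i^2+c_i^2-a_i^2\bigr)/a_i-c_i^2/a_i=(b_i^2-a_i^2)/a_i\). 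Minimality gives
\[
    \sum_i p_i\frac{b_i^2-a_i^2}{a_i}\ge0,
\]
that is, \(\sum_i p_ib_i^2/a_i\ge\alpha\), where \(p_1=p\) and \(p_2=1-p\).

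This is the main obstacle, since the weighted quantity \(\sum_ip_ib_i^2/a_i\) must be turned into \(\beta=\sum_ip_ib_i\), and by Cauchy--Schwarz \(\sum p_ib_i^2/a_i\ge\beta^2/\alpha\) goes the wrong way. A purely local argument is therefore insufficient, and we need the global minimality over all \(\theta\).

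The plan is to exploit homogeneity. Since \(f\) is a norm and \(u\) minimises it on the sphere, the unit ball of the dual norm gives a supporting line. Concretely, for all \(x\),
\[
    f(x)\ge f(u)\,\|x\|.
\]
We then test this with well-chosen \(x\). Set \(w_i=L_i'O'x\), so that \(f(x)=\sum_ip_i\|w_i\|\), and choose \(x\) to make both \(w_i\) nearly parallel to \(e_2\) with a controlled first coordinate. Taking \(x=Ou_\theta\) and using \(pc_1+(1-p)c_2=0\), we aim to show
\[
    \sum_i p_i\sqrt{(a_i\cos\theta+c_i\sin\theta)^2+b_i^2\sin^2\theta}\le \alpha|\cos\theta|+\beta|\sin\theta|
\]
fails to contradict \(f\ge\alpha\) unless \(\beta\ge\alpha\). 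The cleaner formulation is to compare \(f\) with the norm \(g(x)=\|(pL_1+(1-p)L_2)'O'x\|_1\)-type bounds via the triangle inequality, and this is the step to be worked out carefully.

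If this fails, the fallback is to average over \(\theta\), or to use \(\theta\) with \(\tan\theta\) chosen so that one component's first coordinate vanishes.
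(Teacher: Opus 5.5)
Your identification $\alpha=f(u)$ is correct. So are the vanishing of $pc_1+(1-p)c_2$ from the first-order condition and the second-order condition $\sum_i p_ib_i^2/a_i\ge\alpha$; all three match the paper. But the proposal stops at the substantive step: $\alpha\le\beta$ is only planned, and the plans you sketch do not work.

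\begin{itemize}
\item The triangle inequality gives $\|\operatorname{diag}(\alpha,\beta)y\|=\|(pL_1'+(1-p)L_2')y\|\le f(Oy)$. This bounds $f(Oy)$ from below, just as $f(Oy)\ge\alpha\|y\|$ does, so the two inequalities give nothing together.
\item Choosing $\theta$ so that the first coordinate of $L_1'u_\theta$ vanishes gives $\|L_1'u_\theta\|=b_1|\sin\theta|$. The other term, however, is only bounded \emph{below} by $b_2|\sin\theta|$, so $\alpha\le f(Ou_\theta)$ does not yield $\alpha\le\beta$.
\item Averaging over $\theta$ produces mean widths of ellipses rather than $\beta$.
\end{itemize}

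The missing idea is to test global minimality at a vector where the triangle inequality is an \emph{equality}, i.e.\ where $L_1'y$ and $L_2'y$ are positively collinear. Such a $y$ is an eigenvector of the upper triangular matrix $(L_2')^{-1}L_1'$, whose eigenvalues $a_1/a_2$ and $b_1/b_2$ are both positive.

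If these eigenvalues differ, take $y$ associated with $b_1/b_2$. Since $e_1$ spans the other eigenspace, $y_2\ne0$, and
\[
\alpha^2(y_1^2+y_2^2)\le f(Oy)^2=\|\operatorname{diag}(\alpha,\beta)y\|^2=\alpha^2y_1^2+\beta^2y_2^2
\]
gives $\alpha\le\beta$.

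If $a_1/a_2=b_1/b_2$, then $b_i/a_i=\beta/\alpha$ for both $i$. Your Cauchy--Schwarz bound becomes an equality, and your own second-order condition reads $\beta^2/\alpha\ge\alpha$. So your local computation is exactly what handles this degenerate case. The generic case, however, requires the collinearity choice above, which the proposal does not find.
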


\begin{proof}
The function \(f\) is continuous on the compact set
\(\sphere{1}\), so it admits a minimiser \(u\).
Fix a unit vector \(v\perp u\).
Since \(S_1\) and \(S_2\) are positive definite, \(f\) is smooth
on \(\R^2\setminus\{0\}\). By homogeneity and Taylor's formula,
\[
\begin{aligned}
    f\left(\frac{u+tv}{\sqrt{1+t^2}}\right)
    &=\frac{f(u+tv)}{\sqrt{1+t^2}}\\
    &=f(u)+t\,v'\nabla f(u)
      +\frac{t^2}{2}\bigl(v'\nabla^2f(u)v-f(u)\bigr)
      +o(t^2).
\end{aligned}
\]
Since the left-hand side is minimised at \(t=0\), we obtain
the first-order condition
\begin{equation}
    v'\nabla f(u)
    =p\,\frac{v'S_1u}{\sqrt{u'S_1u}}
     +(1-p)\,\frac{v'S_2u}{\sqrt{u'S_2u}}
    =0
    \label{eq:two-dimensional-first-order}
\end{equation}
and the second-order condition
\begin{equation}
\begin{aligned}
    v'\nabla^2f(u)v
    &=p\left(
        \frac{v'S_1v}{\sqrt{u'S_1u}}
        -\frac{(v'S_1u)^2}{(u'S_1u)^{3/2}}
      \right)\\
    &\quad +(1-p)\left(
        \frac{v'S_2v}{\sqrt{u'S_2u}}
        -\frac{(v'S_2u)^2}{(u'S_2u)^{3/2}}
      \right)
    \ge f(u).
\end{aligned}
\label{eq:two-dimensional-second-order}
\end{equation}

Set \(O=(u\ v)\in\mathcal O_2\).
The Cholesky decompositions \(O'S_iO=L_iL_i'\) are given by
\[
    \forall i\in\{1,2\},\quad
    L_i=
    \begin{pmatrix}
        \sqrt{u'S_i u}&0\\[1ex]
        \displaystyle\frac{v'S_i u}{\sqrt{u'S_i u}}
        &
        \displaystyle\sqrt{
            v'S_i v-\frac{(v'S_i u)^2}{u'S_i u}
        }
    \end{pmatrix}.
\]
Using \eqref{eq:two-dimensional-first-order}, we have
\begin{equation}\label{eq:two-dimensional-average}
    pL_1+(1-p)L_2=
    \begin{pmatrix}
        \alpha&0\\
        0&\beta
    \end{pmatrix},
\end{equation}
where
\[
    \alpha=f(u)
    \quad\text{and}\quad
    \beta=
    p\sqrt{v'S_1v-\frac{(v'S_1u)^2}{u'S_1u}}
    +(1-p)\sqrt{v'S_2v-\frac{(v'S_2u)^2}{u'S_2u}}.
\]
Positive definiteness gives \(\alpha,\beta>0\).

We now prove that \(\beta\ge\alpha\).

First observe that
\[
    \forall i\in\{1,2\},\ \forall x\in\R^2,\quad
    \|L_i'x\|
    =\sqrt{x'L_iL_i'x}
    =\sqrt{(Ox)'S_i(Ox)}.
\]
Since \(O\) is orthogonal,
\[
    \min_{x\in\sphere{1}}
    p\|L_1'x\|+(1-p)\|L_2'x\|
    =
    \min_{x\in\sphere{1}}f(Ox)
    =
    \min_{x\in\sphere{1}}f(x)
    =\alpha.
\]
By homogeneity, it follows that
\begin{equation}\label{eq:two-dimensional-global-minimum}
    \forall x\in\R^2,\quad
    p\|L_1'x\|+(1-p)\|L_2'x\|
    \ge\alpha\|x\|.
\end{equation}

We seek a nonzero vector \(x\in\R^2\) for which equality holds
in the triangle inequality
\[
    \left\|\bigl(pL_1'+(1-p)L_2'\bigr)x\right\|
    \le p\|L_1'x\|+(1-p)\|L_2'x\|.
\]
Equality holds whenever \(L_1'x\) and \(L_2'x\) are positively
collinear. This occurs when \(x\) is an eigenvector of
\((L_2')^{-1}L_1'\) associated with a positive eigenvalue. This upper triangular matrix has eigenvalues
\[
    \sqrt{\frac{u'S_1u}{u'S_2u}}
    \quad\text{and}\quad
    \sqrt{
        \frac{
            v'S_1v-\dfrac{(v'S_1u)^2}{u'S_1u}
        }{
            v'S_2v-\dfrac{(v'S_2u)^2}{u'S_2u}
        }
    },
\]
both positive.

Suppose first that these eigenvalues are distinct.
Let \(x=(x_1,x_2)'\) be an eigenvector associated with
the second eigenvalue. The vectors \(L_1'x\) and \(L_2'x\)
are positively collinear, so equality holds in the triangle
inequality. Together with
\eqref{eq:two-dimensional-global-minimum}, this gives
\[
    \alpha\|x\|
    \le \left\|\bigl(pL_1'+(1-p)L_2'\bigr)x\right\|.
\]
Using \eqref{eq:two-dimensional-average} and squaring,
we obtain
\[
    \alpha^2(x_1^2+x_2^2)
    \le\alpha^2x_1^2+\beta^2x_2^2.
\]
Since \(e_1\) is an eigenvector associated with the first
eigenvalue and eigenvectors associated with distinct eigenvalues
are linearly independent, we have \(x_2\ne0\).
This implies \(\alpha^2\le\beta^2\), hence
\(\beta\ge\alpha\) since \(\alpha\) and \(\beta\) are positive.

Suppose now that the eigenvalues coincide. Then
\[
    \frac{v'S_1v}{u'S_1u}
    -\frac{(v'S_1u)^2}{(u'S_1u)^2}
    =
    \frac{v'S_2v}{u'S_2u}
    -\frac{(v'S_2u)^2}{(u'S_2u)^2}.
\]
Factoring out the square root of this common value in the
expression for \(\beta\), we obtain
\[
\begin{aligned}
    \beta
    &=
    \sqrt{
        \frac{v'S_1v}{u'S_1u}
        -\frac{(v'S_1u)^2}{(u'S_1u)^2}
    }\,
    \bigl(p\sqrt{u'S_1u}+(1-p)\sqrt{u'S_2u}\bigr)\\
    &=
    \alpha\sqrt{
        \frac{v'S_1v}{u'S_1u}
        -\frac{(v'S_1u)^2}{(u'S_1u)^2}
    }.
\end{aligned}
\]
Hence, for \(i\in\{1,2\}\),
\[
    \frac{v'S_iv}{u'S_iu}
    -\frac{(v'S_iu)^2}{(u'S_iu)^2}
    =\left(\frac{\beta}{\alpha}\right)^2,
\]
and therefore
\[
\begin{aligned}
    \frac{v'S_iv}{\sqrt{u'S_iu}}
    -\frac{(v'S_iu)^2}{(u'S_iu)^{3/2}}
    &=
    \sqrt{u'S_iu}\left(
        \frac{v'S_iv}{u'S_iu}
        -\frac{(v'S_iu)^2}{(u'S_iu)^2}
    \right)\\
    &=\left(\frac{\beta}{\alpha}\right)^2\sqrt{u'S_iu}.
\end{aligned}
\]
Since \(f(u)=\alpha\), the second-order condition
\eqref{eq:two-dimensional-second-order} thus becomes
\[
    \left(\frac{\beta}{\alpha}\right)^2
    \bigl(p\sqrt{u'S_1u}+(1-p)\sqrt{u'S_2u}\bigr)
    =\frac{\beta^2}{\alpha}
    \ge\alpha.
\]
Thus \(\beta^2\ge\alpha^2\), and positivity of \(\alpha\) and
\(\beta\) again yields \(\beta\ge\alpha\).
\end{proof}

We now show that Lemma~\ref{lem:planar-cholesky} yields the equivalence of
the two convex orders in dimension \(d=2\) when comparing a standard Gaussian
distribution with a mixture of two centred Gaussian
distributions with positive definite variance-covariance
matrices.

\begin{proposition}[The normalised positive definite case]
\label{prop:dimension-two-normalised}
Let \(\Sigma_1,\Sigma_2\in\mathcal S_2^{++}\) and \(p\in(0,1)\).
If
\[
    X\sim\Gauss(0,I_2)
    \quad\text{and}\quad
    Y\sim p\Gauss(0,\Sigma_1)+(1-p)\Gauss(0,\Sigma_2),
\]
then
\[
    X\projcxle Y
    \quad\Longleftrightarrow\quad
    X\cxle Y.
\]
\end{proposition}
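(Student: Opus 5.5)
The implication from multivariate to projected convex order is immediate, so the work is the converse. Assume \(X\projcxle Y\). By \eqref{eq:necessary} with \(\Sigma=I_2\), this means \(\|\xi\|\le p\sqrt{\xi'\Sigma_1\xi}+(1-p)\sqrt{\xi'\Sigma_2\xi}\) for all \(\xi\in\R^2\). Equivalently, the minimum over \(\sphere{1}\) of the function \(f\) of Lemma~\ref{lem:planar-cholesky} (with \(S_i=\Sigma_i\)) satisfies \(\min_{\sphere{1}}f\ge1\). The plan is to verify the sufficient condition \eqref{eq:sufficient} by building the coupling matrix \(C\) explicitly from Lemma~\ref{lem:planar-cholesky}. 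The footnote argument then gives \(X\cxle Y\).

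Apply Lemma~\ref{lem:planar-cholesky} to obtain \(O\in\mathcal O_2\) and lower-triangular \(L_1,L_2\) with \(O'\Sigma_iO=L_iL_i'\) and \(pL_1+(1-p)L_2=\operatorname{diag}(\alpha,\beta)\), where \(\beta\ge\alpha=f(u)=\min_{\sphere{1}}f\ge1\). Take a single standard Gaussian \(G\sim\Gauss(0,I_2)\) and define \(Z_i=OL_iG\). Then \(Z_i\sim\Gauss(0,OL_iL_i'O')=\Gauss(0,\Sigma_i)\). The joint vector \((Z_1,Z_2)\) is Gaussian with covariance \(\Gamma\) built from the blocks \(\Sigma_1\), \(\Sigma_2\) and \(C=OL_1L_2'O'\), so \(\Gamma\in\mathcal S_4^+\) automatically, since it is the covariance of \((OL_1G,OL_2G)\). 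Moreover
\[
    pZ_1+(1-p)Z_2=O\operatorname{diag}(\alpha,\beta)G,
\]
so \(P\Gamma P'=O\operatorname{diag}(\alpha^2,\beta^2)O'\ge I_2\), because \(\beta\ge\alpha\ge1\). This is exactly \eqref{eq:sufficient} with \(\Sigma=I_2\).

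To conclude, apply the Gaussian comparison \eqref{eq:gaussian-covariance} to get \(X\cxle pZ_1+(1-p)Z_2\). Jensen's inequality \(\phi(pZ_1+(1-p)Z_2)\le p\phi(Z_1)+(1-p)\phi(Z_2)\) then yields \(\E[\phi(X)]\le\E[\phi(Y)]\) for every convex \(\phi\), as in the footnote.

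The genuine difficulty, namely ensuring that the larger diagonal entry \(\beta\) is at least the minimum \(\alpha\), is already handled by Lemma~\ref{lem:planar-cholesky}. The only remaining points to check are minor. First, positive definiteness is needed to invoke the lemma, and it is assumed. Second, the projected order must translate into \(\alpha\ge1\) through \eqref{eq:necessary}; this uses homogeneity to pass from \(\sphere{1}\) to all of \(\R^2\).
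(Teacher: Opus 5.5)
Your proposal is correct and is essentially the paper's proof. The paper likewise sets $M_i=OL_i$ from Lemma~\ref{lem:planar-cholesky}, uses $MM'=O\operatorname{diag}(\alpha^2,\beta^2)O'\ge I_2$ with a single standard Gaussian $Z$, and concludes via the Gaussian comparison \eqref{eq:gaussian-covariance} and Jensen's inequality; your phrasing through the matrix $\Gamma$ of \eqref{eq:sufficient} describes the same coupling $(M_1Z,M_2Z)$, which the paper itself makes explicit in Remark~\ref{rem:two-component-matrix-condition}.
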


\begin{proof}
Multivariate convex order implies one-dimensional projected
convex order, so only the converse requires proof.

Suppose that \(X\projcxle Y\). By \eqref{eq:necessary},
\[
    \forall x\in\sphere{1},\
    p\sqrt{x'\Sigma_1x}+(1-p)\sqrt{x'\Sigma_2x}\ge1.
\]
Lemma~\ref{lem:planar-cholesky} provides \(O\in\mathcal O_2\) such that the Cholesky
decompositions
\[
    \forall i\in\{1,2\},\
    O'\Sigma_iO=L_iL_i'
\]
satisfy
\[
    pL_1+(1-p)L_2=
    \begin{pmatrix}
        \alpha&0\\
        0&\beta
    \end{pmatrix}
    \quad\text{with}\quad 1\le\alpha\le\beta.
\]
Set
\[
    \forall i\in\{1,2\},\
    M_i=OL_i,
\]
and write \(M=pM_1+(1-p)M_2\). Then
\[
    \forall i\in\{1,2\},\
    M_iM_i'=\Sigma_i,
\]
and
\[
    MM'=O
    \begin{pmatrix}
        \alpha^2&0\\
        0&\beta^2
    \end{pmatrix}
    O'\ge I_2.
\]
For \(Z\sim\Gauss(0,I_2)\),
\eqref{eq:gaussian-covariance} gives \(Z\cxle MZ\).
Hence, for every convex function \(\phi:\R^2\to\R\),
\[
\begin{aligned}
    \E[\phi(X)]
    &=\E[\phi(Z)]\\
    &\le\E[\phi(MZ)]\\
    &\le p\E[\phi(M_1Z)]
         +(1-p)\E[\phi(M_2Z)]\\
    &=\E[\phi(Y)].
\end{aligned}
\]
Thus \(X\cxle Y\).
\end{proof}

To extend this result to positive semidefinite
variance-covariance matrices, we use two auxiliary lemmas.
The first preserves the directional inequalities under
regularisation. The second allows us to pass to the limit
using Lipschitz convex test functions.

\begin{lemma}[Preservation under regularisation]
\label{lem:square-root-regularisation}
Let \(a,b,c\ge0\) and \(p\in[0,1]\). If
\[
    \sqrt a\le p\sqrt b+(1-p)\sqrt c,
\]
then, for all \(\eta\ge0\),
\[
    \sqrt{a+\eta}
    \le p\sqrt{b+\eta}+(1-p)\sqrt{c+\eta}.
\]
\end{lemma}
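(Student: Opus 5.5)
The natural approach is to recast the inequality as the triangle inequality for the Euclidean norm in \(\R^2\). Set \(x=(\sqrt b,\sqrt\eta)'\) and \(y=(\sqrt c,\sqrt\eta)'\). Then
\[
    \|x\|=\sqrt{b+\eta},\quad \|y\|=\sqrt{c+\eta}
    \quad\text{and}\quad
    px+(1-p)y=\bigl(p\sqrt b+(1-p)\sqrt c,\ \sqrt\eta\bigr)'.
\]
Convexity of the norm, which is the triangle inequality combined with homogeneity, gives
\[
    \sqrt{\bigl(p\sqrt b+(1-p)\sqrt c\bigr)^2+\eta}
    =\|px+(1-p)y\|
    \le p\sqrt{b+\eta}+(1-p)\sqrt{c+\eta}.
\]

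The hypothesis \(\sqrt a\le p\sqrt b+(1-p)\sqrt c\) relates two nonnegative quantities, so squaring preserves it: \(a\le\bigl(p\sqrt b+(1-p)\sqrt c\bigr)^2\). Adding \(\eta\) and using monotonicity of the square root then yields
\[
    \sqrt{a+\eta}\le\sqrt{\bigl(p\sqrt b+(1-p)\sqrt c\bigr)^2+\eta}.
\]
Chaining this with the previous display gives the claim.

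There is no real obstacle here. The only points needing care are that both sides of the hypothesis are nonnegative, which justifies squaring, and that the vector identity is written correctly. An equivalent route is to note that \(s\mapsto\sqrt{s^2+\eta}\) is convex and nondecreasing on \(\R_+\), and to apply it to \(s=p\sqrt b+(1-p)\sqrt c\).
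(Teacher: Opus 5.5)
Your proof is correct and follows the paper's argument: the paper also bounds \(\sqrt{a+\eta}\) by the norm of \(p(\sqrt b,\sqrt\eta)'+(1-p)(\sqrt c,\sqrt\eta)'\) using the hypothesis, then applies the triangle inequality in \(\R^2\). You simply spell out the squaring step, justified by nonnegativity, which the paper leaves implicit.
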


\begin{proof}
The assumed inequality and the triangle inequality in \(\R^2\)
give
\[
    \sqrt{a+\eta}
    \le
    \left\|
        p\begin{pmatrix}\sqrt b\\\sqrt\eta\end{pmatrix}
        +(1-p)\begin{pmatrix}\sqrt c\\\sqrt\eta\end{pmatrix}
    \right\|
    \le p\sqrt{b+\eta}+(1-p)\sqrt{c+\eta}.
\]
\end{proof}

\begin{lemma}[Lipschitz convex test functions]
\label{lem:lipschitz-tests}
Let \(X\) and \(Y\) be \(\R^d\)-valued random variables in \(L^1\).
Then
\begin{equation*}
\begin{aligned}
    X\cxle Y\quad\Longleftrightarrow\quad
    &\E[\phi(X)]\le\E[\phi(Y)]\\
    &\text{for all Lipschitz convex functions }\phi:\R^d\to\R.
\end{aligned}
\end{equation*}
\end{lemma}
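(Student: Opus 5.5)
One implication is immediate. Lipschitz convex functions are convex, so \(X\cxle Y\) gives the comparison for them. All the work is in the converse: assuming \(\E[\phi(X)]\le\E[\phi(Y)]\) for every Lipschitz convex \(\phi\), I have to reach an arbitrary convex \(\phi:\R^d\to\R\), whose expectations may be \(+\infty\) in the extended sense fixed in the introduction.

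I would approximate \(\phi\) from below by an increasing sequence of Lipschitz convex functions. A natural choice is the supremum of finitely many affine minorants. Since \(\phi\) is finite and convex on \(\R^d\), it is the pointwise supremum of the affine functions \(x\mapsto\phi(q)+g_q'(x-q)\), where \(q\) ranges over a countable dense set \(\{q_1,q_2,\ldots\}\) and \(g_q\in\partial\phi(q)\). Indeed, \(\phi\) is continuous and its subgradients are locally bounded, so the supremum over the dense set still recovers \(\phi\) at every point. I then set
\[
\phi_n(x)=\max_{1\le k\le n}\bigl(\phi(q_k)+g_{q_k}'(x-q_k)\bigr).
\]
Each \(\phi_n\) is convex and Lipschitz, with constant \(\max_{k\le n}\|g_{q_k}\|\). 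The sequence is nondecreasing in \(n\) and converges pointwise to \(\phi\).

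Next comes the passage to the limit. All \(\phi_n\) dominate the affine function \(\phi_1\), which is integrable against both laws because \(X,Y\in L^1\). Applying monotone convergence to \(\phi_n-\phi_1\ge0\) gives \(\E[\phi_n(X)]\uparrow\E[\phi(X)]\) and \(\E[\phi_n(Y)]\uparrow\E[\phi(Y)]\), both in \((-\infty,+\infty]\). Letting \(n\to\infty\) in \(\E[\phi_n(X)]\le\E[\phi_n(Y)]\) yields \(\E[\phi(X)]\le\E[\phi(Y)]\).

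The only delicate step is the pointwise convergence \(\phi_n\to\phi\). Fix \(x\) and choose \(q_{k_j}\to x\). The subgradients \(g_{q_{k_j}}\) stay bounded, because the subdifferential of a finite convex function is bounded on compact sets. By continuity of \(\phi\),
\[
\phi(q_{k_j})+g_{q_{k_j}}'(x-q_{k_j})\to\phi(x).
\]
Combined with \(\phi_n\le\phi\), this gives \(\phi_n(x)\to\phi(x)\). I expect no further obstacle.
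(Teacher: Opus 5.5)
Your proof is correct. It shares the paper's overall scheme (increasing Lipschitz convex minorants, then monotone convergence), but it builds the approximants differently.

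The paper first subtracts an affine function \(\ell\) supporting \(\phi\) at \(0\), so that \(\widetilde\phi=\phi-\ell\ge0\). It then uses the inf-convolutions \(\widetilde\phi_m(x)=\inf_{y}\{\widetilde\phi(y)+m\|x-y\|\}\), which are convex, \(m\)-Lipschitz and increase to \(\widetilde\phi\); nonnegativity forces approximate minimisers \(y_m\to x\), and continuity finishes. After monotone convergence, the paper adds back \(\E[\ell(X)]=\E[\ell(Y)]\), using the equality of means obtained from affine tests.

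You instead take finite maxima of tangent affine functions at a countable dense set. This relies on nonempty, locally bounded subdifferentials of a finite convex function, and it buys two things:
\begin{enumerate}[label=(\roman*)]
\item the integrable affine floor \(\phi_1\) is built into every approximant, so you never need \(\E[X]=\E[Y]\) separately;
\item your approximants are polyhedral, so you actually prove a slightly stronger statement: maxima of finitely many affine functions already form a sufficient class of test functions.
\end{enumerate}
The paper's inf-convolution, for its part, needs only a single supporting hyperplane plus continuity of \(\phi\), and it gives approximants with the explicit Lipschitz constant \(m\).
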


\begin{proof}
The forward implication follows directly from the definition
of convex order. Conversely, suppose that the comparison
holds for every Lipschitz convex function.
Affine tests give \(\E[X]=\E[Y]\).

Let \(\phi:\R^d\to\R\) be convex. Choose an affine supporting
function \(\ell\) for \(\phi\) at \(0\) and set
\(\widetilde\phi=\phi-\ell\), so that
\(\widetilde\phi\ge0\) and \(\widetilde\phi(0)=0\).

For all integers \(m\ge1\), define the inf-convolution
\[
    \widetilde\phi_m(x)
    =\inf_{y\in\R^d}
      \bigl\{\widetilde\phi(y)+m\|x-y\|\bigr\}.
\]
The function
\((x,y)\mapsto\widetilde\phi(y)+m\|x-y\|\)
is convex, so its partial infimum
\(\widetilde\phi_m\) is convex.
The triangle inequality gives
\[
    \forall x,z\in\R^d,\
    |\widetilde\phi_m(x)-\widetilde\phi_m(z)|
    \le m\|x-z\|,
\]
hence \(\widetilde\phi_m\) is \(m\)-Lipschitz.
Moreover,
\[
    \forall x\in\R^d,\
    0\le\widetilde\phi_m(x)
    \le\min\{\widetilde\phi(x),m\|x\|\},
\]
and \(\widetilde\phi_m(x)\) is nondecreasing in \(m\).

We claim that
\(\widetilde\phi_m(x)\uparrow\widetilde\phi(x)\)
for all \(x\in\R^d\).
Fix \(x\) and choose approximate minimisers \(y_m\) such that
\[
    \widetilde\phi(y_m)+m\|x-y_m\|
    \le\widetilde\phi_m(x)+\frac1m
    \le\widetilde\phi(x)+\frac1m.
\]
Since \(\widetilde\phi\ge0\), this implies \(y_m\underset{m\to+\infty}{\longrightarrow} x\).
Also,
\(\widetilde\phi_m(x)\ge\widetilde\phi(y_m)-1/m\),
so continuity of \(\widetilde\phi\) gives
\[
    \liminf_{m\to+\infty}\widetilde\phi_m(x)
    \ge\lim_{m\to+\infty}\widetilde\phi(y_m)
    =\widetilde\phi(x).
\]
Together with
\(\widetilde\phi_m(x)\le\widetilde\phi(x)\),
this proves the claim.

The assumed comparison applies to each
\(\widetilde\phi_m\), giving
\[
    \forall m\ge1,\
    \E[\widetilde\phi_m(X)]
    \le\E[\widetilde\phi_m(Y)].
\]
Monotone convergence yields
\(\E[\widetilde\phi(X)]\le\E[\widetilde\phi(Y)]\).
Since \(\ell\) is affine and \(\E[X]=\E[Y]\),
adding the equal finite expectations
\(\E[\ell(X)]\) and \(\E[\ell(Y)]\) gives
\(\E[\phi(X)]\le\E[\phi(Y)]\).

Thus \(X\cxle Y\).
\end{proof}

We can now state the result in dimension \(d=2\) without any
invertibility assumption.

\begin{theorem}[Comparison with a two-component Gaussian mixture in dimension \(d=2\)]
\label{thm:dimension-two}
Let \(\Sigma,\Sigma_1,\Sigma_2\in\mathcal S_2^+\)
and \(p\in(0,1)\). If
\[
    X\sim\Gauss(0,\Sigma)
    \quad\text{and}\quad
    Y\sim p\Gauss(0,\Sigma_1)+(1-p)\Gauss(0,\Sigma_2),
\]
then
\[
    X\projcxle Y
    \quad\Longleftrightarrow\quad
    X\cxle Y.
\]
\end{theorem}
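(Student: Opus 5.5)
Only the implication \(X\projcxle Y\Rightarrow X\cxle Y\) needs proof, since the converse is automatic. The plan is to reduce to Proposition~\ref{prop:dimension-two-normalised} by a regularisation that makes all matrices positive definite, normalise \(\Sigma\) to the identity, and then pass to the limit using Lemma~\ref{lem:lipschitz-tests}.

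Assume \(X\projcxle Y\), so that \eqref{eq:necessary} holds. For \(\eta>0\), set \(\Sigma^\eta=\Sigma+\eta I_2\) and \(\Sigma_i^\eta=\Sigma_i+\eta I_2\). Fix \(\xi\in\sphere{1}\) and apply Lemma~\ref{lem:square-root-regularisation} with \(a=\xi'\Sigma\xi\), \(b=\xi'\Sigma_1\xi\) and \(c=\xi'\Sigma_2\xi\). This gives \(\sqrt{\xi'\Sigma^\eta\xi}\le p\sqrt{\xi'\Sigma_1^\eta\xi}+(1-p)\sqrt{\xi'\Sigma_2^\eta\xi}\) for all \(\xi\in\sphere{1}\), and by homogeneity for all \(\xi\in\R^2\). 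By \eqref{eq:necessary}, this means \(X^\eta\projcxle Y^\eta\), where \(X^\eta\sim\Gauss(0,\Sigma^\eta)\) and \(Y^\eta\sim p\Gauss(0,\Sigma_1^\eta)+(1-p)\Gauss(0,\Sigma_2^\eta)\).

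Next, normalise. Let \(T=(\Sigma^\eta)^{-1/2}\), which is invertible. Then \(TX^\eta\sim\Gauss(0,I_2)\) and \(TY^\eta\) is the mixture with components \(T\Sigma_i^\eta T\in\mathcal S_2^{++}\). Linear images preserve both orders, because \(\xi'TX=(T\xi)'X\) and \(\phi\circ T\) is convex whenever \(\phi\) is. Hence \(TX^\eta\projcxle TY^\eta\), and Proposition~\ref{prop:dimension-two-normalised} gives \(TX^\eta\cxle TY^\eta\). Applying \(T^{-1}\), we obtain \(X^\eta\cxle Y^\eta\).

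Finally, let \(\eta\downarrow0\). Realise \(X^\eta=X+\sqrt\eta G\) and \(Y^\eta=Y+\sqrt\eta G'\), where \(G,G'\sim\Gauss(0,I_2)\) are independent of \(X\) and \(Y\) respectively; this gives the correct laws, including for each mixture component. For a Lipschitz convex \(\phi\) with constant \(L\), we have \(|\E[\phi(X^\eta)]-\E[\phi(X)]|\le L\sqrt\eta\,\E\|G\|\to0\), and similarly for \(Y\). The inequality \(\E[\phi(X^\eta)]\le\E[\phi(Y^\eta)]\) therefore passes to the limit, and Lemma~\ref{lem:lipschitz-tests} yields \(X\cxle Y\).

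The only delicate points are checking that the regularised inequality is exactly the hypothesis of Proposition~\ref{prop:dimension-two-normalised} after normalisation, and justifying the limit. The latter is handled by restricting to Lipschitz test functions, and the substantive work has already been done in Lemma~\ref{lem:planar-cholesky}.
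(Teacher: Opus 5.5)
Your proposal is correct and follows essentially the same route as the paper. You regularise by adding $\eta I_2$ via Lemma~\ref{lem:square-root-regularisation}, normalise by $(\Sigma^\eta)^{-1/2}$ to invoke Proposition~\ref{prop:dimension-two-normalised}, and pass to the limit through Lipschitz convex test functions and Lemma~\ref{lem:lipschitz-tests}. The differences are cosmetic: the paper treats the positive definite case first and, in the limit step, bounds the $X$ side by conditional Jensen with a common independent $Z$, whereas you regularise unconditionally and use the Lipschitz estimate on both sides. Your remark that convolution with $\Gauss(0,\eta I_2)$ acts componentwise on the mixture is also correct.
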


\begin{proof}
Multivariate convex order implies one-dimensional projected
convex order, so only the converse requires proof.
Suppose that \(X\projcxle Y\).

First assume that all three matrices are positive definite.
Both orders are preserved under linear transformations, so
\[
    \Sigma^{-1/2}X\projcxle\Sigma^{-1/2}Y.
\]
The random variable on the left is standard Gaussian.
The mixture on the right has positive definite
variance-covariance matrices
\[
    \forall i\in\{1,2\},\
    \widetilde\Sigma_i
    =\Sigma^{-1/2}\Sigma_i\Sigma^{-1/2}.
\]
Proposition~\ref{prop:dimension-two-normalised} gives therefore
\[
    \Sigma^{-1/2}X\cxle\Sigma^{-1/2}Y.
\]
Applying \(\Sigma^{1/2}\) yields \(X\cxle Y\).

We now consider the general positive semidefinite case.
By \eqref{eq:necessary},
\[
    \forall x\in\R^2,\
    \sqrt{x'\Sigma x}
    \le p\sqrt{x'\Sigma_1x}
         +(1-p)\sqrt{x'\Sigma_2x}.
\]
For \(\varepsilon>0\), apply
Lemma~\ref{lem:square-root-regularisation} with
\(\eta=\varepsilon\|x\|^2\) to obtain
\[
    \forall x\in\R^2,\
    \sqrt{x'(\Sigma+\varepsilon I_2)x}
    \le p\sqrt{x'(\Sigma_1+\varepsilon I_2)x}
         +(1-p)\sqrt{x'(\Sigma_2+\varepsilon I_2)x}.
\]
Let \(Z\sim\Gauss(0,I_2)\) be independent of \((X,Y)\).
By \eqref{eq:necessary},
\[
    X+\sqrt{\varepsilon}\,Z
    \projcxle Y+\sqrt{\varepsilon}\,Z.
\]
The positive definite case therefore gives
\[
    X+\sqrt{\varepsilon}\,Z
    \cxle Y+\sqrt{\varepsilon}\,Z.
\]
For all Lipschitz convex functions \(\phi:\R^2\to\R\)
with Lipschitz constant \(L\), conditional Jensen's
inequality and the preceding comparison yield
\[
\begin{aligned}
    \E[\phi(X)]
    &\le\E[\phi(X+\sqrt{\varepsilon}\,Z)]\\
    &\le\E[\phi(Y+\sqrt{\varepsilon}\,Z)]\\
    &\le\E[\phi(Y)]
         +L\sqrt{\varepsilon}\,\E[\|Z\|].
\end{aligned}
\]
Letting \(\varepsilon\downarrow0\) and applying
Lemma~\ref{lem:lipschitz-tests} gives \(X\cxle Y\).
\end{proof}

\subsection{Equivalence in arbitrary dimension}

We now extend the change of basis argument to arbitrary dimension by
induction.\footnote{Benjamin Jourdain suggested extending the
two-dimensional construction by induction when I presented
the argument establishing the result in dimension \(d=2\) to him on the sidelines of the London-Paris Bachelier Workshop in September 2026. The proof below follows his
suggestion, and the credit for the idea of the method used
for the extension to arbitrary dimension should go mainly
to him.} After choosing a minimising direction, we apply the induction
hypothesis to the two Schur complements on its orthogonal
complement. The two-dimensional lemma is key to ensuring that the
smallest diagonal coefficient obtained in this way is at
least the initial minimum.

\begin{proposition}[A change of basis in arbitrary dimension]
\label{prop:matrix-construction}
Let \(d\ge2\), \(S_1,S_2\in\mathcal S_d^{++}\) and \(p\in(0,1)\). There exists \(O\in\mathcal O_d\) such that the Cholesky decompositions
\[
    \forall i\in\{1,2\},\quad O'S_iO=L_iL_i'
\]
satisfy
\[
    pL_1+(1-p)L_2=\begin{pmatrix}\eta_1&&0\\&\ddots&\\0&&\eta_d\end{pmatrix} \quad \text{with} \quad 0<\eta_1\le\cdots\le\eta_d,
\]
where
\[
    \eta_1=\min_{x\in\sphere{d-1}} p\sqrt{x'S_1x}+(1-p)\sqrt{x'S_2x}.
\]
\end{proposition}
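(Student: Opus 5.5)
We argue by induction on \(d\). The case \(d=2\) is exactly Lemma~\ref{lem:planar-cholesky}. For the inductive step, let \(d\ge3\), let \(f(x)=p\sqrt{x'S_1x}+(1-p)\sqrt{x'S_2x}\), and let \(u\in\sphere{d-1}\) minimise \(f\) on the sphere, so \(f(u)=\eta_1\). Complete \(u\) by an orthonormal basis \(W\in\mathcal M_{d,d-1}\) of \(u^\perp\), and write \(O_0=(u\ W)\). The first-order condition, obtained exactly as in the proof of Lemma~\ref{lem:planar-cholesky} but now for every \(v\in u^\perp\), gives
\[
    p\,\frac{W'S_1u}{\sqrt{u'S_1u}}+(1-p)\,\frac{W'S_2u}{\sqrt{u'S_2u}}=0.
\]
The Cholesky factor of \(O_0'S_iO_0\) has first column \(\bigl(\sqrt{u'S_iu},\,W'S_iu/\sqrt{u'S_iu}\bigr)'\), and its lower-right block is the Cholesky factor of the Schur complement
\[
    T_i=W'S_iW-\frac{W'S_iu\,u'S_iW}{u'S_iu}\in\mathcal S_{d-1}^{++}.
\]
Hence the first column of \(pL_1+(1-p)L_2\) is \((\eta_1,0,\ldots,0)'\).

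Next we apply the induction hypothesis (or Lemma~\ref{lem:planar-cholesky} when \(d-1=2\); the trivial case \(d-1=1\) is not needed) to \(T_1,T_2\). This yields \(\widetilde O\in\mathcal O_{d-1}\) with Cholesky factors \(\widetilde L_i\) of \(\widetilde O'T_i\widetilde O\) such that \(p\widetilde L_1+(1-p)\widetilde L_2=\operatorname{diag}(\eta_2,\ldots,\eta_d)\), where \(\eta_2\le\cdots\le\eta_d\) and \(\eta_2=\min_{y\in\sphere{d-2}}p\sqrt{y'T_1y}+(1-p)\sqrt{y'T_2y}\). Set \(O=O_0\operatorname{diag}(1,\widetilde O)\), whose first column is still \(u\). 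Rotating \(W\) by \(\widetilde O\) multiplies the vectors \(W'S_iu\) by \(\widetilde O'\), so the first-order identity persists. The Cholesky factor of \(O'S_iO\) is then
\[
    L_i=\begin{pmatrix}\sqrt{u'S_iu}&0\\ \widetilde O'W'S_iu/\sqrt{u'S_iu}&\widetilde L_i\end{pmatrix},
\]
by uniqueness of Cholesky factors with positive diagonal. Consequently \(pL_1+(1-p)L_2=\operatorname{diag}(\eta_1,\eta_2,\ldots,\eta_d)\).

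It remains to show \(\eta_1\le\eta_2\), and this is the main obstacle. Let \(y\in\sphere{d-2}\) attain \(\eta_2\), and set \(w=W\widetilde O y\), a unit vector orthogonal to \(u\). We restrict to the plane \(\operatorname{span}(u,w)\). In the basis \((u,w)\), the \(2\times2\) matrices \(S_i^{(2)}=(u\ w)'S_i(u\ w)\) are positive definite. On the unit circle of this plane, \(f\) restricted to the plane is at least \(\eta_1\) everywhere and equals \(\eta_1\) at \(u\), so \(u\) is a minimiser of the planar function. Lemma~\ref{lem:planar-cholesky} applied to \(S_1^{(2)},S_2^{(2)}\) with minimiser \(e_1\) and \(v=e_2\) therefore gives \(\alpha=\eta_1\le\beta\). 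Its \(\beta\) is
\[
    p\sqrt{w'S_1w-\frac{(w'S_1u)^2}{u'S_1u}}+(1-p)\sqrt{w'S_2w-\frac{(w'S_2u)^2}{u'S_2u}}
    =p\sqrt{y'\widetilde O'T_1\widetilde Oy}+(1-p)\sqrt{y'\widetilde O'T_2\widetilde Oy}.
\]
Because \(\widetilde O\) is orthogonal, the minimum of this last expression over \(y\in\sphere{d-2}\) coincides with the minimum of \(p\sqrt{y'T_1y}+(1-p)\sqrt{y'T_2y}\), namely \(\eta_2\). So, taking \(y\) to be a minimiser of the rotated expression, \(\beta=\eta_2\). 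This is exactly why the statement must record \(\eta_1\) as the minimum: the inductive hypothesis supplies the identification of \(\eta_2\) with the minimum for the Schur complements. We conclude \(\eta_1\le\eta_2\le\cdots\le\eta_d\), and positivity of each \(\eta_i\) follows from positive definiteness.
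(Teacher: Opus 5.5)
Your proof is correct and follows essentially the same route as the paper: induction through a minimising direction \(u\), the first-order condition to clear the first column, the induction hypothesis applied to the two Schur complements, and Lemma~\ref{lem:planar-cholesky} in the plane \(\operatorname{span}(u,w)\) to obtain \(\eta_1\le\eta_2\). The only difference is cosmetic: the paper proves the lower bound \(\eta_1\) for every \(y\in\sphere{d-2}\) before invoking the induction hypothesis, whereas you use it only at a minimiser of the rotated expression, so your opening phrase ``let \(y\) attain \(\eta_2\)'' should be read with the correction you make later.
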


\begin{proof}
We argue by induction on \(d\). The case \(d=2\) is Lemma~\ref{lem:planar-cholesky}. Let \(d\ge3\) and suppose that the result holds in dimension \(d-1\).

Choose a minimiser \(u\in\sphere{d-1}\) of
\[
    f:x\in\sphere{d-1}\longmapsto p\sqrt{x'S_1x}+(1-p)\sqrt{x'S_2x},
\]
and set \(\eta_1=f(u)>0\). Such a minimiser exists by continuity and compactness.

Let \(V\in\mathcal M_{d,d-1}\) have columns forming an orthonormal basis of \(\operatorname{span}(u)^\perp\), so that \((u\ V)\in\mathcal O_d\). The first-order condition at \(u\) then reads
\begin{equation}\label{eq:construction-first-order}
    p\,\frac{V'S_1u}{\sqrt{u'S_1u}}+(1-p)\,\frac{V'S_2u}{\sqrt{u'S_2u}}=0.
\end{equation}

For each \(i\in\{1,2\}\), the matrix \(S_i\) is positive definite, hence so is \((u\ V)'S_i(u\ V)\). Its Schur complement
\begin{equation}\label{eq:construction-schur}
    \widetilde S_i=V'S_iV-\frac{V'S_iu\,u'S_iV}{u'S_iu}
\end{equation}
therefore belongs to \(\mathcal S_{d-1}^{++}\).

We first show that
\begin{equation}\label{eq:construction-minimum-bound}
    \forall y\in\sphere{d-2},\quad p\sqrt{y'\widetilde S_1y}+(1-p)\sqrt{y'\widetilde S_2y}\ge\eta_1.
\end{equation}
Fix \(y\in\sphere{d-2}\) and set \(v=Vy\). Then \(u\) and \(v\) are orthonormal, and \(u\) minimises \(f\) on the unit circle of \(\operatorname{span}(u,v)\). For each \(i\in\{1,2\}\), the restriction of the quadratic form \(x\mapsto x'S_ix\) to this plane has matrix
\[
    \begin{pmatrix}u'S_iu&u'S_iv\\v'S_iu&v'S_iv\end{pmatrix}
\]
in the basis \((u,v)\). The second diagonal entry in the lower triangular matrix of its Cholesky decomposition is
\[
    \sqrt{v'S_iv-\frac{(v'S_iu)^2}{u'S_iu}}=\sqrt{y'\widetilde S_iy}.
\]
The two-dimensional argument of Lemma~\ref{lem:planar-cholesky}, applied in this basis with the minimiser \(u\), therefore gives
\[
    p\sqrt{y'\widetilde S_1y}+(1-p)\sqrt{y'\widetilde S_2y}\ge p\sqrt{u'S_1u}+(1-p)\sqrt{u'S_2u}=\eta_1.
\]
This proves \eqref{eq:construction-minimum-bound}.

Applying the induction hypothesis to \(\widetilde S_1,\widetilde S_2\), we obtain \(\widetilde O\in\mathcal O_{d-1}\) such that the Cholesky decompositions
\[
    \forall i\in\{1,2\},\quad \widetilde O'\widetilde S_i\widetilde O=\widetilde L_i\widetilde L_i'
\]
satisfy
\[
    p\widetilde L_1+(1-p)\widetilde L_2=\begin{pmatrix}\eta_2&&0\\&\ddots&\\0&&\eta_d\end{pmatrix} \quad \text{with} \quad 0<\eta_2\le\cdots\le\eta_d,
\]
where
\[
    \eta_2=\min_{y\in\sphere{d-2}} p\sqrt{y'\widetilde S_1y}+(1-p)\sqrt{y'\widetilde S_2y}\ge\eta_1.
\]
Now, set \(O=(u\ V\widetilde O)\in\mathcal O_d\) and
\[
    \forall i\in\{1,2\},\quad L_i=\begin{pmatrix}\sqrt{u'S_iu}&0\\[1ex]\dfrac{\widetilde O'V'S_iu}{\sqrt{u'S_iu}}&\widetilde L_i\end{pmatrix}.
\]
By \eqref{eq:construction-schur},
\begin{equation}\label{eq:construction-factorisation}
    \forall i\in\{1,2\},\quad L_iL_i'=O'S_iO.
\end{equation}
Since each \(L_i\) is lower triangular with positive diagonal entries, the identities in \eqref{eq:construction-factorisation} are Cholesky decompositions.

Finally, \eqref{eq:construction-first-order} gives
\[
    pL_1+(1-p)L_2=\begin{pmatrix}\eta_1&0\\0&p\widetilde L_1+(1-p)\widetilde L_2\end{pmatrix}=\begin{pmatrix}\eta_1&&0\\&\ddots&\\0&&\eta_d\end{pmatrix},
\]
with \(0<\eta_1\le\cdots\le\eta_d\). This completes the induction.
\end{proof}

We now apply the above construction to prove the equivalence
between one-dimensional projected convex order and multivariate
convex order for a centred Gaussian distribution dominated by
a mixture of two centred Gaussian distributions.

\begin{theorem}[Two-component Gaussian mixtures]
\label{thm:two-components-arbitrary-dimension}
Let \(d\ge2\), \(p\in(0,1)\) and
\(\Sigma,\Sigma_1,\Sigma_2\in\mathcal S_d^+\). Let
\[
    X\sim\Gauss(0,\Sigma)
    \quad\text{and}\quad
    Y\sim p\Gauss(0,\Sigma_1)+(1-p)\Gauss(0,\Sigma_2).
\]
Then
\[
    X\cxle Y\quad\Longleftrightarrow\quad X\projcxle Y.
\]
\end{theorem}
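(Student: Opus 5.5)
The plan is to copy the proof of Theorem~\ref{thm:dimension-two} almost verbatim, with Proposition~\ref{prop:matrix-construction} taking the place of Lemma~\ref{lem:planar-cholesky}. The forward implication is immediate, since multivariate convex order implies one-dimensional projected convex order. For the converse, assume \(X\projcxle Y\). Then \eqref{eq:necessary} gives \(\sqrt{x'\Sigma x}\le p\sqrt{x'\Sigma_1x}+(1-p)\sqrt{x'\Sigma_2x}\) for all \(x\in\R^d\).

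\emph{Normalised positive definite case.} First take \(\Sigma=I_d\) and \(\Sigma_1,\Sigma_2\in\mathcal S_d^{++}\). Here \eqref{eq:necessary} says exactly that \(\eta_1\ge1\), where \(\eta_1\) is the minimum of \(p\sqrt{x'\Sigma_1x}+(1-p)\sqrt{x'\Sigma_2x}\) over \(\sphere{d-1}\). Proposition~\ref{prop:matrix-construction} supplies \(O\in\mathcal O_d\) and Cholesky factors \(L_i\) such that \(pL_1+(1-p)L_2=\operatorname{diag}(\eta_1,\ldots,\eta_d)\) with \(1\le\eta_1\le\cdots\le\eta_d\). Set \(M_i=OL_i\), so that \(M_iM_i'=\Sigma_i\), and let \(M=pM_1+(1-p)M_2\). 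Then \(MM'=O\operatorname{diag}(\eta_j^2)O'\ge I_d\). Let \(Z\sim\Gauss(0,I_d)\). By \eqref{eq:gaussian-covariance}, \(Z\cxle MZ\), and Jensen's inequality gives \(\phi(MZ)\le p\phi(M_1Z)+(1-p)\phi(M_2Z)\) for convex \(\phi\). Since \(M_iZ\sim\Gauss(0,\Sigma_i)\), taking expectations yields \(\E[\phi(X)]\le\E[\phi(Y)]\).

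\emph{General positive definite case.} If \(\Sigma,\Sigma_1,\Sigma_2\) are all positive definite, apply \(\Sigma^{-1/2}\). Both orders are preserved under linear maps, so this reduces to the normalised case with \(\widetilde\Sigma_i=\Sigma^{-1/2}\Sigma_i\Sigma^{-1/2}\in\mathcal S_d^{++}\). Applying \(\Sigma^{1/2}\) afterwards returns the conclusion for the original variables.

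\emph{Semidefinite case.} For \(\varepsilon>0\), Lemma~\ref{lem:square-root-regularisation} with \(\eta=\varepsilon\|x\|^2\) transfers the directional inequality to \(\Sigma+\varepsilon I_d\), \(\Sigma_1+\varepsilon I_d\) and \(\Sigma_2+\varepsilon I_d\). Let \(Z\) be standard Gaussian and independent of \((X,Y)\). Then \(X+\sqrt\varepsilon Z\sim\Gauss(0,\Sigma+\varepsilon I_d)\), and \(Y+\sqrt\varepsilon Z\) is the mixture with components \(\Gauss(0,\Sigma_i+\varepsilon I_d)\). Hence \eqref{eq:necessary} gives projected order for these perturbed variables, and the positive definite case upgrades it to multivariate convex order. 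For an \(L\)-Lipschitz convex \(\phi\), conditional Jensen's inequality and the Lipschitz bound give
\[
\E[\phi(X)]\le\E[\phi(X+\sqrt\varepsilon Z)]\le\E[\phi(Y+\sqrt\varepsilon Z)]\le\E[\phi(Y)]+L\sqrt\varepsilon\,\E[\|Z\|].
\]
Letting \(\varepsilon\downarrow0\) and invoking Lemma~\ref{lem:lipschitz-tests} completes the proof.

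\emph{Main obstacle.} The substantive work is the construction in Proposition~\ref{prop:matrix-construction}, which is already established. What remains to check is the identification \(\eta_1\ge1\) from \eqref{eq:necessary}, together with the fact that \(\eta_1\le\eta_j\) for all \(j\) forces \(MM'\ge I_d\). Both are direct. The only care needed is to confirm that the mixture structure survives the independent Gaussian perturbation, and it does, because adding \(\sqrt\varepsilon Z\) to each component gives \(\Gauss(0,\Sigma_i+\varepsilon I_d)\).
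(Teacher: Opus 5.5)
Your proposal is correct and follows the paper's proof essentially step for step. Like the paper, you first treat the normalised positive definite case via Proposition~\ref{prop:matrix-construction} with \(M_i=OL_i\), then reduce the general positive definite case through \(\Sigma^{-1/2}\), and finally handle the semidefinite case by \(\varepsilon I_d\)-regularisation (Lemma~\ref{lem:square-root-regularisation}) together with Lipschitz convex tests (Lemma~\ref{lem:lipschitz-tests}).
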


\begin{proof}
Only the reverse implication needs to be proved.

Suppose that \(X\projcxle Y\). By \eqref{eq:necessary},
\begin{equation}\label{eq:two-components-directional}
    \forall \xi\in\R^d,\quad \sqrt{\xi'\Sigma\xi}\le p\sqrt{\xi'\Sigma_1\xi}+(1-p)\sqrt{\xi'\Sigma_2\xi}.
\end{equation}
First assume that \(\Sigma=I_d\) and \(\Sigma_1,\Sigma_2\in\mathcal S_d^{++}\). Apply Proposition~\ref{prop:matrix-construction} with \(S_i=\Sigma_i\), for \(i\in\{1,2\}\), and set \(M_i=OL_i\). Then \(M_iM_i'=\Sigma_i\), and \eqref{eq:two-components-directional} gives
\[
    \eta_1=\min_{x\in\sphere{d-1}} p\sqrt{x'\Sigma_1x}+(1-p)\sqrt{x'\Sigma_2x}\ge1.
\]
Since \(\eta_d\ge\cdots\ge\eta_1\ge1\), we obtain
\[
    \bigl(pM_1+(1-p)M_2\bigr)\bigl(pM_1+(1-p)M_2\bigr)'=O\begin{pmatrix}\eta_1^2&&0\\&\ddots&\\0&&\eta_d^2\end{pmatrix}O'\ge I_d.
\]
For \(Z\sim\Gauss(0,I_d)\), Gaussian comparison and Jensen's inequality therefore yield
\[
    X\cxle\bigl(pM_1+(1-p)M_2\bigr)Z\cxle Y,
\]
exactly as in the proof of Theorem~\ref{thm:dimension-two}.

If all three variance-covariance matrices are positive definite, the same change of variables as in the proof of Theorem~\ref{thm:dimension-two} reduces the comparison to the normalised case: apply the preceding argument to \(\Sigma^{-1/2}X\) and \(\Sigma^{-1/2}Y\), then use preservation of convex order under the linear map \(x\mapsto\Sigma^{1/2}x\).

Finally, if the variance-covariance matrices are only positive semidefinite, Lemma~\ref{lem:square-root-regularisation} ensures that \eqref{eq:two-components-directional} remains valid after adding \(\varepsilon I_d\) to each variance-covariance matrix, for every \(\varepsilon>0\). The positive definite case thus gives
\[
    X+\sqrt{\varepsilon}\,Z\cxle Y+\sqrt{\varepsilon}\,Z,
\]
where \(Z\sim\Gauss(0,I_d)\) is independent of \((X,Y)\). As in the proof of Theorem~\ref{thm:dimension-two}, letting \(\varepsilon\downarrow0\) in expectations of Lipschitz convex test functions and applying Lemma~\ref{lem:lipschitz-tests} yields \(X\cxle Y\).
\end{proof}

\begin{remark}[The sufficient condition of Jourdain and Pag\`es]
\label{rem:two-component-matrix-condition}
Suppose that the equivalent conditions of Theorem~\ref{thm:two-components-arbitrary-dimension} hold. When all three variance-covariance matrices are positive definite, the preceding proof constructs matrices \(M_1,M_2\in\mathcal M_d\) satisfying
\begin{equation}\label{eq:two-component-matrix-condition}
    \forall i\in\{1,2\},\quad M_iM_i'=\Sigma_i \qquad\text{and}\qquad \Sigma\le\bigl(pM_1+(1-p)M_2\bigr)\bigl(pM_1+(1-p)M_2\bigr)'.
\end{equation}
For general positive definite \(\Sigma\), these matrices are obtained by multiplying those constructed for the normalised problem with \(\Sigma=I_d\) on the left by \(\Sigma^{1/2}\). To recover the formulation \eqref{eq:sufficient}, set \(P=(pI_d\ \ (1-p)I_d)\) and
\[
    \Gamma=\begin{pmatrix}\Sigma_1&M_1M_2'\\M_2M_1'&\Sigma_2\end{pmatrix}=\begin{pmatrix}M_1\\M_2\end{pmatrix}\begin{pmatrix}M_1'&M_2'\end{pmatrix}\in\mathcal S_{2d}^+.
\]
Then \(\Gamma\) is the variance-covariance matrix of \((M_1Z,M_2Z)\), where \(Z\sim\Gauss(0,I_d)\), and \eqref{eq:two-component-matrix-condition} gives \(\Sigma\le P\Gamma P'\).

The same matrix representation holds when the variance-covariance matrices are singular. Indeed, in the positive definite case, polar decomposition gives \(M_i=\Sigma_i^{1/2}O_i\), with \(O_i\in\mathcal O_d\), for~\(i\in\{1,2\}\). Multiplying both matrices on the right by \(O_1'\) preserves \eqref{eq:two-component-matrix-condition} and allows us to take \(M_1=\Sigma_1^{1/2}\) and \(M_2=\Sigma_2^{1/2}O\), where \(O=O_2O_1'\in\mathcal O_d\).

For every \(\varepsilon>0\), Lemma~\ref{lem:square-root-regularisation} ensures that the directional inequalities hold for \(\Sigma+\varepsilon I_d\), \(\Sigma_1+\varepsilon I_d\) and \(\Sigma_2+\varepsilon I_d\). Applying the positive definite construction and the preceding observation yields \(O_\varepsilon\in\mathcal O_d\) such that, with
\[
    M_{1,\varepsilon}=(\Sigma_1+\varepsilon I_d)^{1/2}, \qquad M_{2,\varepsilon}=(\Sigma_2+\varepsilon I_d)^{1/2}O_\varepsilon,
\]
we have
\[
    \Sigma+\varepsilon I_d\le\bigl(pM_{1,\varepsilon}+(1-p)M_{2,\varepsilon}\bigr)\bigl(pM_{1,\varepsilon}+(1-p)M_{2,\varepsilon}\bigr)'.
\]
By compactness of \(\mathcal O_d\), there exist a sequence \(\varepsilon_m\downarrow0\) and \(\widehat O\in\mathcal O_d\) such that \(O_{\varepsilon_m}\to\widehat O\). Continuity of the positive semidefinite square root gives
\[
    M_{1,\varepsilon_m}\longrightarrow M_1:=\Sigma_1^{1/2}, \qquad M_{2,\varepsilon_m}\longrightarrow M_2:=\Sigma_2^{1/2}\widehat O.
\]
Since the positive semidefinite cone is closed, passing to the limit in the preceding inequality yields
\[
    \Sigma\le\bigl(pM_1+(1-p)M_2\bigr)\bigl(pM_1+(1-p)M_2\bigr)'.
\]
Moreover, \(M_iM_i'=\Sigma_i\) for all \(i\in\{1,2\}\), so \eqref{eq:two-component-matrix-condition} holds. Defining \(\Gamma\) by the same block formula as before therefore gives a positive semidefinite variance-covariance matrix with diagonal blocks \(\Sigma_1,\Sigma_2\) and \(\Sigma\le P\Gamma P'\). Thus, for two components, the sufficient condition \eqref{eq:sufficient} of Jourdain and Pag\`es is also necessary, including for singular variance-covariance matrices.
\end{remark}

\begin{remark}[A common radial generator]
The Gaussian assumption is not essential to the argument: the matrix construction depends only on the directional inequalities, while the probabilistic comparison uses properties shared by linear images of a common radial random vector. Therefore, the equivalence between multivariate convex order, one-dimensional projected convex order and the directional inequalities \eqref{eq:two-components-directional} remains valid for
\[
    X\stackrel{\mathcal D}{=}\Sigma^{1/2}Z, \qquad \distribution(Y)=p\distribution(\Sigma_1^{1/2}Z)+(1-p)\distribution(\Sigma_2^{1/2}Z),
\]
where \(Z\) is an \(\R^d\)-valued random variable in \(L^1\) with a radial distribution and \(\mathbb P(Z\ne0)>0\). Here, \(\Sigma,\Sigma_1,\Sigma_2\in\mathcal S_d^+\) are scatter matrices, rather than necessarily variance-covariance matrices.

Indeed, rotational invariance gives
\[
    \forall \xi\in\R^d,\ \forall A\in\mathcal M_d,\quad \E[|\xi'AZ|]=\sqrt{\xi'AA'\xi}\,\E[|Z_1|],
\]
where \(Z_1\) denotes the first coordinate of \(Z\). Since \(0<\E[|Z_1|]<+\infty\), absolute-value tests yield the directional inequalities from projected convex order. Conversely, these inequalities give the matrix representation \eqref{eq:two-component-matrix-condition}, by the construction in Remark~\ref{rem:two-component-matrix-condition}. Comparison of linear images of a common radial random vector and Jensen's inequality then yield
\[
    \Sigma^{1/2}Z\cxle\bigl(pM_1+(1-p)M_2\bigr)Z\cxle Y,
\]
where the last comparison uses \(M_iZ\stackrel{\mathcal D}{=}\Sigma_i^{1/2}Z\) for all \(i\in\{1,2\}\). This includes centred multivariate Student distributions with a common number \(\nu>1\) of degrees of freedom.
\end{remark}

\section{Concluding remarks}
\label{sec:conclusion}

One-dimensional projected convex order does not, in general,
characterise multivariate convex order, even for symmetric
or radial distributions. The radial setting makes the
distinction particularly transparent: multivariate convex
order compares the radii in increasing convex order, whereas
projected convex order compares them after multiplication
by a common independent angular factor. Our counterexamples
show that this multiplication can conceal a failure of
increasing convex order between the radii.

Nevertheless, we have proved that one-dimensional projected
convex order characterises multivariate convex order when
a centred Gaussian distribution is compared with a mixture
of two centred Gaussian distributions in every dimension \(d\ge2\).
In this setting, the sufficient condition of Jourdain and
Pag\`es is also necessary. 

The case of mixtures with \(n\ge3\) centred Gaussian
components remains open. Our proof in arbitrary dimension
relies on the two-dimensional lemma at each induction step.
For two components, equality in the triangle inequality
reduces to an eigenvector problem for the single matrix
\((L_2')^{-1}L_1'\). With more components, the analogous
argument would require an additional eigenvector
common to several matrices, beyond the trivial one
they already share. Such a common eigenvector need not
exist, which appears to prevent a direct extension of
this argument.

Future work should determine whether the equivalence holds
for mixtures with \(n\ge3\) components and how the answer
depends on the dimension \(d\). We do not propose a
conjecture on these questions.

\section*{Statements}

\subsection*{Acknowledgements}

This research was inspired by a talk given by Benjamin Jourdain at the
Séminaire Bachelier in 2025. My initial goal was simply to construct,
numerically and quickly, counterexamples to the equivalence left open
in~\cite{JourdainPages2025}. This proved somewhat presumptuous -- in the
two-component case, there are none. A touch of academic pride in the face
of this difficulty nevertheless prompted me to pursue the question further,
ultimately leading to the results presented here.

I would like to thank Benjamin Jourdain for that initial talk and for our subsequent discussions, particularly during my visit to CERMICS (École nationale des ponts et chaussées), where I presented some early and inconclusive findings, and at the London-Paris Bachelier Workshop at the Institut Henri Poincaré, where I showed him the change-of-basis trick that had led me to establish the equivalence in the two-component case in dimension \(d=2\). His suggestion to extend the construction by induction led to the argument in arbitrary dimension developed here.

I would also like to thank Guillaume Carlier for a helpful discussion over lunch and afterwards
while I was exploring the Gaussian mixture problem from a convex-geometric angle using Minkowski sums of ellipses.

Warm thanks also go to the participants in the Groupe de Travail
``Mathématiques financières et actuarielles, probabilités numériques'' at the Laboratoire de Probabilités, Statistique et Modélisation, in particular Stéphane Crépey, Idris Kharroubi and Gilles Pagès.

Finally, my thanks go to my long-standing supporter, Jean-Michel
Lasry, who has shared with me over the years his deep understanding of
convex analysis, from original applications of convex duality to inf-sup
convolution, raising me beyond the level of a \textquotedblleft Super Taupin\textquotedblright{} -- as
Pierre-Louis Lions once called me, in a sense he meant critically but
which I took as a compliment.

\subsection*{Data availability}

No datasets were generated or analysed in this study.

\subsection*{Declaration of generative AI use}

During the preparation of this manuscript, I used various versions of Google's Gemini, OpenAI's ChatGPT, and Anthropic's Claude. With the exception noted below, the mathematical ideas, results, and proof strategies were developed without the use of generative AI: I believe I will always find a particular pleasure in doing mathematics in the classical way. These tools were used mainly at the writing stage, to help reorganise proofs when needed, improve exposition and notation, identify points requiring further justification, and assist with editing, spell-checking, and syntactic refinement. As a native French speaker, I hoped they might bring my English a little closer to that of His Majesty's subjects. I also used these tools to modernise and tidy the \LaTeX{} source code.

There is one substantive exception to the above. The polynomial counterexample in Section~\ref{sec:radial-polynomial} was found using generative AI tools. Their ability to suggest counterexamples at times seems almost superhuman, even to someone who kept Bertrand Hauchecorne's well-known French book on counterexamples by his bedside during his undergraduate years -- and not merely for its fancy cover.

All AI-assisted material was reviewed, checked, and edited by me, perhaps reintroducing a few Gallicisms and misspellings along the way. I take full responsibility for the mathematical arguments and the final text.

\end{document}